\newfont{\teneufm}{eufm10}
\newfont{\seveneufm}{eufm7}
\newfont{\fiveeufm}{eufm5}
\def\bbbc{{\mathchoice {\setbox0=\hbox{$\displaystyle\rm C$}\hbox{\hbox
to0pt{\kern0.4\wd0\vrule height0.9\ht0\hss}\box0}}
{\setbox0=\hbox{$\textstyle\rm C$}\hbox{\hbox
to0pt{\kern0.4\wd0\vrule height0.9\ht0\hss}\box0}}
{\setbox0=\hbox{$\scriptstyle\rm C$}\hbox{\hbox
to0pt{\kern0.4\wd0\vrule height0.9\ht0\hss}\box0}}
{\setbox0=\hbox{$\scriptscriptstyle\rm C$}\hbox{\hbox
to0pt{\kern0.4\wd0\vrule height0.9\ht0\hss}\box0}}}}
\def\bbbq{{\mathchoice {\setbox0=\hbox{$\displaystyle\rm
Q$}\hbox{\raise
0.15\ht0\hbox to0pt{\kern0.4\wd0\vrule height0.8\ht0\hss}\box0}}
{\setbox0=\hbox{$\textstyle\rm Q$}\hbox{\raise
0.15\ht0\hbox to0pt{\kern0.4\wd0\vrule height0.8\ht0\hss}\box0}}
{\setbox0=\hbox{$\scriptstyle\rm Q$}\hbox{\raise
0.15\ht0\hbox to0pt{\kern0.4\wd0\vrule height0.7\ht0\hss}\box0}}
{\setbox0=\hbox{$\scriptscriptstyle\rm Q$}\hbox{\raise
0.15\ht0\hbox to0pt{\kern0.4\wd0\vrule height0.7\ht0\hss}\box0}}}}
\def\bbbt{{\mathchoice {\setbox0=\hbox{$\displaystyle\rm
T$}\hbox{\hbox to0pt{\kern0.3\wd0\vrule height0.9\ht0\hss}\box0}}
{\setbox0=\hbox{$\textstyle\rm T$}\hbox{\hbox
to0pt{\kern0.3\wd0\vrule height0.9\ht0\hss}\box0}}
{\setbox0=\hbox{$\scriptstyle\rm T$}\hbox{\hbox
to0pt{\kern0.3\wd0\vrule height0.9\ht0\hss}\box0}}
{\setbox0=\hbox{$\scriptscriptstyle\rm T$}\hbox{\hbox
to0pt{\kern0.3\wd0\vrule height0.9\ht0\hss}\box0}}}}
\def\bbbs{{\mathchoice
{\setbox0=\hbox{$\displaystyle     \rm S$}\hbox{\raise0.5\ht0\hbox
to0pt{\kern0.35\wd0\vrule height0.45\ht0\hss}\hbox
to0pt{\kern0.55\wd0\vrule height0.5\ht0\hss}\box0}}
{\setbox0=\hbox{$\textstyle        \rm S$}\hbox{\raise0.5\ht0\hbox
to0pt{\kern0.35\wd0\vrule height0.45\ht0\hss}\hbox
to0pt{\kern0.55\wd0\vrule height0.5\ht0\hss}\box0}}
{\setbox0=\hbox{$\scriptstyle      \rm S$}\hbox{\raise0.5\ht0\hbox
to0pt{\kern0.35\wd0\vrule height0.45\ht0\hss}\raise0.05\ht0\hbox
to0pt{\kern0.5\wd0\vrule height0.45\ht0\hss}\box0}}
{\setbox0=\hbox{$\scriptscriptstyle\rm S$}\hbox{\raise0.5\ht0\hbox
to0pt{\kern0.4\wd0\vrule height0.45\ht0\hss}\raise0.05\ht0\hbox
to0pt{\kern0.55\wd0\vrule height0.45\ht0\hss}\box0}}}}
\def\bbbz{{\mathchoice {\hbox{$\sf\textstyle Z\kern-0.4em Z$}}
{\hbox{$\sf\textstyle Z\kern-0.4em Z$}}
{\hbox{$\sf\scriptstyle Z\kern-0.3em Z$}}
{\hbox{$\sf\scriptscriptstyle Z\kern-0.2em Z$}}}}
 \newtheorem{thm}{Theorem}
 \newtheorem{lem}[thm]{Lemma}
 \theoremstyle{definition}
 \theoremstyle{remark}
\def\cA{{\mathcal A}}
\def\cI{{\mathcal I}}
\def\cP{{\mathcal P}}
\def\cS{{\mathcal S}}
\def\cU{{\mathcal U}}
\def\cX{{\mathcal X}}
\def\cY{{\mathcal Y}}
\def\cZ{{\mathcal Z}}
\def\({\left(}
\def\){\right)}
\def\[{\left[}
\def\]{\right]}
\def\<{\langle}
\def\>{\rangle}
\def\ssum{\mathop{\sum\, \sum}}
\def\fB{{\mathfrak B}}
\def\fl#1{\left\lfloor#1\right\rfloor}
\def\rf#1{\left\lceil#1\right\rceil}
\def\F{\mathbb{F}}
\def\Z{\mathbb{Z}}
\def\Q{\mathbb{Q}}
\def\R{\mathbb{R}}
\def\ep{{\mathbf{\,e}}}
\def\e{{\mathbf{\,e}}}
\def\ord{\operatorname{ord}_p\,}
\def\vec#1{\mathbf{#1}}
\def\mand{\qquad\mbox{and}\qquad}
\begin{document}

\title[Points on   Markoff-Hurwitz Hypersurfaces]{On the Density
of Integer 
Points on  Generalised Markoff-Hurwitz   and Dwork 
Hypersurfaces}

\author
{Mei-Chu Chang}
\address{Department of Mathematics, University of California,
Riverside,  CA 92521, USA}
\email{mcc@math.ucr.edu}

\author{Igor E. Shparlinski} 
\address{Department of Pure Mathematics, University of New South Wales, 
Sydney, NSW 2052, Australia}
\email{igor.shparlinski@unsw.edu.au}

\begin{abstract}  
We use bounds of mixed  character sums modulo a square-free integer $q$
of a special structure 
to estimate the density of integer points on the hypersurface 
$$
f_1(x_1) + \ldots + f_n(x_n) =a x_1^{k_1} \ldots x_n^{k_n} 
$$
for some polynomials $f_i \in \Z[X]$ and nonzero integers $a$
and $k_i$, 
$i=1, \ldots, n$. 
In the case of 
$$
f_1(X) = \ldots = f_n(X) = X^2\mand
k_1 = \ldots = k_n =1
$$ 
the above hypersurface is known as the Markoff-Hurwitz hypersurface, 
while for 
$$
f_1(X) = \ldots = f_n(X) = X^n\mand
k_1 = \ldots = k_n =1
$$ 
it is known as the Dwork hypersurface. 
Our results are
substantially stronger than those known for 
general hypersurfaces. 
\end{abstract}

\subjclass[2010]{11D45, 11D72,   11L40}

\keywords{Integer points on hypersurfaces, multiplicative character sums}

\maketitle

\section{Introduction}

Studying the density of integer and rational 
points $(x_1, \ldots, x_n)$ on hypersurfaces 
has always been an active 
area of research, where many rather involved methods
have led to remarkable achievements, see~\cite{Brow2,BrHBSa,H-B,H-BP1,Mar1,Mar2,Salb1,Salb2,Tsch} 
and references therein. More precisely, given a  hypersurface 
$$
F(x_1, \ldots, x_n) = 0
$$
defined by a polynomial $F \in \Z[X_1, \ldots, X_n]$ in $n$ variables,
the goal is to estimate the number $N_F(\fB)$ of solutions 
$(x_1, \ldots, x_n) \in \Z^n$ that fall in a hypercube $\fB$ of the 
form 
\begin{equation}
\label{eq:box}
 \fB = [u_1+1,u_1+h]\times \ldots \times [u_n+1, u_n+h]. 
\end{equation}
Unfortunately, even in the most favourable situation, 
the currently known general approaches lead only to a  bound of 
the form $N_F(\fB) = O\(h^{n-2+\varepsilon}\)$ for any fixed 
$\varepsilon>0$ or even weaker, see~\cite{BrHBSa,H-BP1,Salb1,Salb2}.
For some special types of hypersurfaces the strongest known bounds are due 
 Heath-Brown~\cite{H-B} and 
Marmon~\cite{Mar1,Mar2}.
For example, for hypercubes around  the origin, Marmon~\cite{Mar2}
gives a bound of the form  $N_F(\fB) = O\(h^{n-4+\delta_n}\)$ for 
a class of hypersurfaces, 
with some explicit function $\delta_n$ such that $\delta_n  \sim 37/n$ as 
$n \to \infty$.
Combining this bound with some previous results and 
methods, for a certain class of hypersurfaces,  
Marmon~\cite{Mar2} also derives the bound 
$N_F(\fB) = O\(h^{n-4+\delta_n}+ h^{n-3+\varepsilon}\)$
which holds for an arbitrary hypercube $\fB$ with any fixed $\varepsilon>0$ 
and the implied constant that depends only of $\deg F$, $n$ and $\varepsilon$
(note that  $\delta_n> 1$ for $n < 29$).

Finally, we also recall that when the number of variables $n$ is exponentially large 
compared to $d$ and the highest degree form of $F$ is nonsingular, then
the methods developed as the continuation 
of the work of Birch~\cite{Birch} lead to much stronger 
bounds, of essentially optimal order of magnitude.

Here, we show that in some interesting special cases, to which 
further developments of~\cite{Birch} do not apply
(as the highest degree form is singular and the number of variables
is not large enough) 
a modular approach  leads to stronger bounds 
where the saving actually grows with $n$ (at a logarithmic rate).


More precisely we concentrate on hypersurfaces of the form
\begin{equation}
\label{eq:MHD eq}
f_1(x_1) + \ldots + f_n(x_n)  = ax_1^ {k_1} \ldots x_n^{k_n} 
\end{equation}
defined by some polynomials $f_i \in \Z[X]$ and nonzero integers  $a$
and  $k_i$, $i=1, \ldots, n$.
In particular, we use $N_{a, \vec{f}, \vec{k}}(\fB)$
to denote the number of integer solutions to~\eqref{eq:MHD eq}
with $(x_1, \ldots, x_n) \in \fB$, where $\vec{f} = (f_1,\ldots, f_n)$
and $\vec{k} = (k_1,\ldots, k_n)$.

In the case of 
\begin{equation}
\label{eq:MH poly}
f_1(X) = \ldots = f_n(X) = X^2\mand
k_1 = \ldots = k_n =1,
\end{equation}
the equation~\eqref{eq:MHD eq}  defines the {\it Markoff-Hurwitz hypersurface\/},
see~\cite{Bar1,Bar2,Bar3,Cao}, where various questions related to
these   hypersurfaces have been investigated.

Furthermore, for
\begin{equation}
\label{eq:D poly}
f_1(X) = \ldots = f_n(X) = X^n\mand
k_1 = \ldots = k_n =1,
\end{equation}
the equation~\eqref{eq:MHD eq} is known as the {\it Dwork hypersurface\/}, which has been  intensively 
studied by various authors~\cite{Gou,HS-BT,Katz,Kloost,Yu}, in particular, 
as an example of a {\it Calabi--Yau variety\/}.

We remark that solutions with at least one component $x_i=0$, $i=1, \ldots, n$, 
correspond to solutions of a diagonal equation 
$$
\sum_{\substack{j=1\\j \ne i}}^n f_j(x_j)  = -f_i(0)
$$
to which one can apply the standard circle method. 

To clarify our ideas and to make the exposition simpler 
we concentrate here on the solutions to~\eqref{eq:MHD eq}
with $x_1\ldots x_n \ne 0$. In particular, we use 
$N_{a, \vec{f}, \vec{k}}^*(\fB)$ to denote the number 
of such solutions. Clearly for the hypercubes $\fB$ 
of the form~\eqref{eq:box}  we have 
$$
N_{a, \vec{f}, \vec{k}}^*(\fB) = N_{a, \vec{f}, \vec{k}}(\fB). 
$$

Throughout the paper, the implied constants in the symbols ``$O$'', ``$\ll$'' and
``$\gg$'' may depend on 
the  polynomials $\deg f_i$, the coefficient $a$  
and the exponents $k_{i}$ in~\eqref{eq:MHD eq},
$i=1, \ldots, n$, and also on the integer positive 
parameters $r$ and $\nu$.
We recall that
the expressions $A=O(B)$, $A \ll B$ and $B \gg A$ are each equivalent to the
statement that $|A|\le cB$ for some constant $c$.   

Here, we use some ideas from~\cite{Shp}, combined 
a new bound of mixed character sums, that can be of 
independent interest, to derive the following 
result:

\begin{thm}
\label{thm:General1}  
Let 
$f_1(X),\ldots, f_n(X)\in \Z[X]$ be $n$ polynomials
of degrees at most $d$, 
and let $k_1,\ldots, k_n\ge 1$ be odd 
integers. 
For any fixed integer $r\ge 1$, there is a constant $C(r)$
depending only on $r$, such that, 
uniformly over all boxes $\fB$ of the form~\eqref{eq:box}
with 
$$
 \max_{i=1,\ldots, n} |u_i| \le \exp(C(r) h^{4/9})
$$
for the solutions to the equation~\eqref{eq:MHD eq},  we have  
$$
 N_{a, \vec{f}, \vec{k}}^*(\fB) \ll h^{n-4r/9} 
$$
provided that 
$$
n > (d+1)(d+2) 2^{r}\max\left\{2r, 3r-9/2\right\} + 2. 
$$
\end{thm}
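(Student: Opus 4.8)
The plan is to pass to a congruence modulo a carefully chosen square-free modulus $q$, detect that congruence by orthogonality of characters, and extract a power saving from mixed multiplicative–additive character sums over short intervals.

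\textbf{Step 1: reduction to a congruence.} First I would fix a square-free $q$, a product of about $r$ primes of size adjusted so that $q\asymp h^{4r/9}$ when $4r/9\ge 2$ and $q\asymp h^{2}$ otherwise, chosen coprime to $a$, to the leading coefficients of $f_1,\ldots,f_n$, and to a thin exceptional set of primes that governs the character sum estimates of Step 3; this choice of primes is the ``special structure'' of $q$, and for large $r$ one takes more primes so that the Burgess-type estimates remain usable. The hypothesis $\max_i|u_i|\le\exp(C(r)h^{4/9})$ is exactly what makes such a $q$ available: the number of primes to be avoided is $O\!\left(\sum_i\log(2+|u_i|)+\log|a|\right)$, which stays below the supply of admissible primes near $h^{4/9}$. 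Since every integer solution of~\eqref{eq:MHD eq} counted by $N_{a,\vec{f},\vec{k}}^*(\fB)$ satisfies
$$f_1(x_1)+\ldots+f_n(x_n)\equiv a\,x_1^{k_1}\cdots x_n^{k_n}\pmod q,$$
it suffices to bound the number of $(x_1,\ldots,x_n)\in\fB$ with this congruence; a routine reduction, fixing an offending coordinate and inducting on $n$ (using that $q$ has $O(1)$ prime factors, each a positive power of $h$), lets us further assume $\gcd(x_i,q)=1$ for every $i$.

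\textbf{Step 2: orthogonality and the main term.} Next I would introduce an auxiliary variable $\mu\in(\Z/q\Z)^*$ with $\mu\equiv x_1^{k_1}\cdots x_n^{k_n}\pmod q$, detect it by the multiplicative characters $\chi$ modulo $q$, and detect the residual additive congruence $f_1(x_1)+\ldots+f_n(x_n)\equiv a\mu\pmod q$ by additive characters. Summing over $\mu$, over $\chi$, and over the additive frequency $\lambda$, and carrying out the now completely separated sums over the $x_i$, expresses the count as
$$\frac{1}{\phi(q)\,q}\sum_{\chi}\sum_{\lambda}G(\chi,\lambda)\prod_{i=1}^{n}S_i(\chi,\lambda),\qquad S_i(\chi,\lambda)=\sum_{x=u_i+1}^{u_i+h}\chi^{k_i}(x)\,\e_q(\lambda f_i(x)),$$
where $G(\chi,\lambda)=\sum_{\mu}\overline{\chi}(\mu)\,\e_q(-\lambda a\mu)$ is a Gauss-type sum, so $|G(\chi,\lambda)|\ll q^{1/2}$, except that $G(\chi,0)=0$ for $\chi\neq\chi_0$ while $G(\chi_0,0)=\phi(q)$. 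The contribution of $\chi=\chi_0$, $\lambda=0$ is the expected main term $\ll h^n/q\ll h^{n-4r/9}$, and it remains to bound every other term by $O(h^{n-4r/9})$.

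\textbf{Step 3: the off-diagonal.} The terms with $\chi^{k_i}=\chi_0$ for all $i$ — a bounded set of $\chi$, on which the monomial acts trivially — together with $\lambda\neq0$, are handled by the Weil bound, the second-moment identity $\sum_{\lambda}|S_i(\chi_0,\lambda)|^2\ll qh$, and divisor estimates that use the size of the prime factors of $q$. For all other pairs some $\chi^{k_i}$ is non-trivial — and here it is used that the $k_i$ are odd, so that every character of even order, in particular the quadratic one, has $\chi^{k_i}=\chi\neq\chi_0$ — and for each such index $i$ the new mixed character sum bound gives, uniformly in $\lambda$ and in $u_i$,
$$|S_i(\chi,\lambda)|\ll h^{1-c},\qquad c\gg\frac{1}{(d+1)(d+2)\,2^{r}\max\{2r,\,3r-9/2\}},$$
valid down to $h\asymp q^{4/9}$. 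Multiplying this over the indices with non-trivial $\chi^{k_i}$, bounding the other factors by $h$, and using $|G(\chi,\lambda)|\ll q^{1/2}$ together with the bound $\ll q^{2}$ for the number of pairs $(\chi,\lambda)$, this block is $\ll q^{1/2}h^{n-c(n-2)+o(1)}$; with $q\asymp h^{4r/9}$ this is $\ll h^{n-4r/9}$ once $n$ exceeds $1/c$ by a bounded factor, and carrying out the optimization — the value of $c$ being smallest precisely in the Burgess/composite-modulus regime reached for large $r$ — produces the stated lower bound $n>(d+1)(d+2)\,2^{r}\max\{2r,\,3r-9/2\}+2$. Collecting the three contributions gives $N_{a,\vec{f},\vec{k}}^*(\fB)\ll h^{n-4r/9}$.

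\textbf{The main obstacle.} The only step that is not bookkeeping is the mixed character sum bound used in Step 3: one must save a fixed positive power of $h$ in $\sum_{x}\chi^{k_i}(x)\,\e_q(\lambda f_i(x))$ over an interval of length $h$, down to the threshold $h\asymp q^{4/9}$, and for a polynomial $f_i$ of degree $d$ rather than a monomial. I expect this to be a Burgess-type argument — a multiplicative shift $x\mapsto x(1+q_0y)$ on the short interval, followed by Hölder's inequality at a high even moment and an iteration carried out about $r$ times (each step squaring the sum, whence the factor $2^{r}$) that reduces the estimate to counting points on systems of polynomial congruences with $\ll(d+1)(d+2)$ parameters, which are then handled by the Weil bounds. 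It is precisely the optimization of this argument that pins down the exponents $4/9$ and $4r/9$ and the lower bound required of $n$; everything else is the orthogonality and completion bookkeeping sketched above.
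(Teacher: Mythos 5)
Your overall architecture --- reduce to a congruence modulo a square-free $q$ with $r$ prime factors of size about $h^{4/9}$, detect it with multiplicative and additive characters and a Gauss sum, peel off the principal-character block, and feed a mixed character sum bound into $n-2$ of the factors $S_i$ --- is exactly the paper's. But the two places where the real work happens are not right as you describe them. First, the range of validity you assign to the mixed sum bound, ``down to $h\asymp q^{4/9}$'', makes the scheme collapse for $r\ge 6$: there $q\asymp h^{4r/9}\gg h^{9/4}$, so an estimate valid only for $h\gtrsim q^{4/9}$ never applies in the relevant range. The crucial feature of Lemma~\ref{lem:MixedSum sf} is that its threshold is $h\ge(2Q)^{9/4}$ with $Q$ the size of each \emph{individual prime factor}, not of $q$ itself; this is obtained not by a Burgess multiplicative shift but by the additive shift $x\mapsto x+py$ (which drops the conductor from $q$ to $q/p$), a discretization of the real polynomial phase into $M^{(d+1)(d+2)/2}$ cells (the source of the factor $(d+1)(d+2)$), H\"older at exponent $2k$, and the Graham--Ringrose-type iterated bound of Lemma~\ref{lem:PureSum} over the remaining primes (the source of $2^{r}$). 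Your guessed mechanism (Burgess iteration plus Weil on congruence systems, relative to the full modulus) is a genuinely different route and is unavailable at intervals this short compared to $q$.

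Second, your coprimality reduction fails as stated: if $p\mid x_i$ for some $p\mid q$, the trivial count of such solutions is only $O(h^{n-4/9})$, far short of the target $h^{n-4r/9}$ for $r\ge2$, and ``inducting on $n$'' does not apply because once $p\mid x_i$ the right-hand side vanishes modulo $p$ and the congruence degenerates into a diagonal one, no longer of the required shape (and the inductive threshold on $n$ would degrade). The paper instead averages over $q\in\cP_r(Q)$ (Lemma~\ref{lem:Coprime P}) to select one modulus for which most solutions have all coordinates coprime to $q$; this, and not avoiding primes dividing the $u_i$, is where the hypothesis $\max_i|u_i|\le\exp(C(r)h^{4/9})$ enters, since the coordinates of solutions can be as large as $\exp(C(r)h^{4/9})+h$. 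A lesser issue: bounding the last two factors $S_1,S_2$ trivially by $h$ and the Gauss sum by $q^{1/2}$, as you propose, yields the condition $n>3r\cdot2^{r}(d+1)(d+2)+2$; to reach $\max\{2r,\,3r-9/2\}$ one needs the fourth-moment bound $\sum_{\chi}|S_i(\chi;\lambda)|^4\le qW$ combined with the Cochrane--Shi estimate of Lemma~\ref{lem:4th Moment} for the congruence $wx\equiv yz\pmod q$.
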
 

The proof of Theorem~\ref{thm:General1} is based on a bound of 
mixed character sums which combines the ideas from~\cite{Chang1,H-BP2}.

Unfortunately Theorem~\ref{thm:General1} does not apply to the  Dwork hypersurface
as the degrees of the polynomials in~\eqref{eq:D poly} are 
too large for our argument to work. So here apply an alternative
approach that is based on   the 
method of Postnikov~\cite{Post1,Post2} (see also~\cite{Chang2} and
the references therein for further developments). This leads to a 
much more precise 
 bound which however applies only when the degree of the 
 polynomials $f_1(X),\ldots, f_n(X)$ are sufficiently large. 

\begin{thm}
\label{thm:General2}  
Let 
$f_1(X),\ldots, f_n(X)\in \Z[X]$ be $n$ polynomials
of degrees at least $d$, 
and let $k_1,\ldots, k_n\ge 1$ be odd 
integers. 
There is an absolute constant $C$
such that,
uniformly over all boxes $\fB$ of the form~\eqref{eq:box}
with 
$$
 \max_{i=1,\ldots, n} |u_i| \le \exp(C h^{1/3})
$$
and any fixed integer $r\ge 1$ with  
$$
r \le \min_{i=1, \ldots, n} \deg f_i
$$ 
for the solutions to the equation~\eqref{eq:MHD eq}, we have  
$$
 N_{a, \vec{f}, \vec{k}}^*(\fB) \ll h^{n-r/3} 
$$
provided that 
$$
n >  2 r^3 + 1. 
$$
\end{thm}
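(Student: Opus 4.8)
The plan is to follow the modular strategy already outlined in the paper: reduce the problem of counting integer points on \eqref{eq:MHD eq} inside a box $\fB$ to counting solutions modulo a well-chosen square-free modulus $q$, and then bound the latter by a bound for mixed multiplicative/additive character sums. The key difference from Theorem~\ref{thm:General1} is that here the polynomials $f_i$ have \emph{large} degree (at least $d\ge r$), so instead of the Chang--Heath-Brown type bound we invoke Postnikov's method, which gives extremely strong cancellation for character sums $\sum_x \e_q(f(x))$ when $q=p^s$ is a prime power and $f$ has a term whose degree is coprime to $p$ and not too large compared to $s$.

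First I would set up the reduction. Let $q$ be a square-free integer, a product of $\nu$ distinct primes in a dyadic range, chosen so that $q$ is somewhat smaller than $h$ (so that the box $\fB$, after reduction mod $q$, still looks ``spread out'' — this is where the constraint $\max_i|u_i|\le\exp(Ch^{1/3})$ enters, forcing $\log q \asymp h^{1/3}$ or so). Expressing the indicator of the equation via additive characters modulo $q$ and the indicator of membership in the box via a standard Fourier expansion (completing the intervals), one writes
$$
N^*_{a,\vec f,\vec k}(\fB) = \frac{h^n}{q} + (\text{error}),
$$
where the error term is a sum over nonzero frequencies of products over $i=1,\ldots,n$ of one-variable mixed sums of the shape
$$
S_i = \sum_{x} \e_q\!\bigl(c\, f_i(x) + \lambda\, x^{k_i} \cdot(\cdots) + b_i x\bigr),
$$
twisted by the multiplicative structure coming from the monomial $x_1^{k_1}\cdots x_n^{k_n}$ on the right-hand side. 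The monomial is handled exactly as in \cite{Shp}: one introduces an auxiliary multiplicative character (or sums over the value of the product), so that each variable sees a \emph{mixed} additive-plus-multiplicative character sum. Because the $k_i$ are odd, the substitution $x\mapsto -x$ and the usual tricks let one keep the argument genuinely one-dimensional in each variable.

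Next comes the heart of the matter: bounding each factor $S_i$ via Postnikov's method. By CRT it suffices to treat a prime-power modulus; but since $q$ is square-free we actually only need prime moduli $p$, and here the relevant input is a Weil-type bound for mixed sums $\sum_{x\bmod p}\chi(x)\e_p(f_i(x))$, giving square-root cancellation $O(d\, p^{1/2})$ as long as $f_i$ is not of a degenerate shape. Raising to the $n$-th power and summing over the $\nu$ choices of frequency then produces a saving of the form $q^{-n/2}$ against the main term $h^n/q$, i.e. an overall bound $h^n q^{-1} \cdot (q/h)^{?}$; optimising the size of $q$ and the number of prime factors $\nu$ against $h$ yields the exponent $h^{n-r/3}$. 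The role of the parameter $r\le\min_i\deg f_i$ is that Postnikov's machinery lets one iterate the reduction $r$ times (lifting the prime modulus to $p^r$ and exploiting that the top-degree behaviour survives), each iteration contributing a factor $h^{-1/3}$ to the saving; this is precisely why we need the degrees to be at least $r$ and why the final answer is $h^{n-r/3}$ rather than $h^{n-1/3}$. The condition $n>2r^3+1$ is what is needed so that, after the $r$-fold Postnikov iteration (which loses a factor of roughly $r^2$ in the number of ``good'' variables and another $r$ in bookkeeping), enough variables remain to make the product of character-sum bounds dominate the trivial contribution and to absorb the $\nu$ and $r$ dependence into the implied constant.

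The main obstacle, I expect, is the uniformity in the box parameters $u_i$: one must choose the modulus $q$ (and the number of its prime factors) large enough that the completion-of-intervals error is negligible, yet small enough relative to $h$ that the character-sum saving beats the main term — and simultaneously ensure that the $u_i$, which may be as large as $\exp(Ch^{1/3})$, do not spoil the equidistribution mod $q$. This forces $q$ into a narrow window and requires a careful Postnikov-style estimate with \emph{explicit} tracking of how the constant $C$ must be chosen; getting the exponents $1/3$ and $r/3$ to match up is exactly the combinatorial optimisation that pins down the hypothesis $n>2r^3+1$. A secondary technical point is dealing with the variables $x_i$ that are divisible by one of the prime factors of $q$ (so that $x_i^{k_i}$ vanishes mod that prime): these must be stripped off and estimated separately, using that there are few such variables or by a further descent, exactly as in the diagonal-equation remark preceding the theorems.
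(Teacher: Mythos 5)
Your proposal has the right general flavour (reduce to a congruence and use Postnikov), but the actual mechanism is off in several places where it matters. First, the modulus: for this theorem the paper does \emph{not} use a square-free product of $\nu$ primes at all (that is the device for Theorem~\ref{thm:General1}); it fixes a single prime $p\asymp h^{1/3}$ and works modulo the prime power $p^r$, which is typically much \emph{larger} than $h$. The saving $h^{-r/3}$ does not come from square-root cancellation raised to the $n$-th power: it is simply the trivial-term bound $h^n/p^r\asymp h^{n-r/3}$ coming from the principal character and zero frequency, and the whole point of the character-sum work is to show that the nontrivial terms are dominated by this. Your central analytic claim --- Weil-type bounds $O(dp^{1/2})$ for complete sums mod $p$, ``raised to the $n$-th power'' --- is both internally inconsistent with your invocation of Postnikov (which you yourself note needs a prime power) and insufficient: what is actually needed is a bound for \emph{incomplete} mixed sums $\sum_{x=u+1}^{u+h}\chi^{\mu}(x)\e(\lambda f(x)/p^r)$ of strength $h^{1-1/4r^2}$ (Lemma~\ref{lem:MixedSum pk}), proved by Postnikov's formula (Lemma~\ref{lem:Post}) to convert the multiplicative character into an additive one with a polynomial argument, followed by Vinogradov-type estimates of Wooley (Lemma~\ref{lem:Wooley}) and a separate treatment of the degenerate residues $y\in\cY$; this is also exactly where the hypothesis $r\le\min_i\deg f_i$ enters (nonvanishing of the relevant leading coefficients mod $p$), not through any ``$r$-fold iteration each contributing $h^{-1/3}$.'' Likewise the condition $n>2r^3+1$ has a precise origin: applying the $h^{1-1/4r^2}$ bound to $n-2$ of the variables, the fourth-moment/Gauss-sum estimates (Lemma~\ref{lem:4th Moment}) to the remaining two, and comparing $h^{n-1-(n-2)/4r^2+r/6}$ with $h^{n-r/3}$ forces $(n-2)/4r^2\ge r/2$; your ``loses $r^2$ good variables and $r$ in bookkeeping'' is not a derivation of this.

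Two further structural points. The passage from integer points to the congruence is not done by completing intervals with a main term $h^n/q$ plus Fourier error; the paper bounds $N^*_{a,\vec f,\vec k}(\fB)\le 2T$ where $T$ counts solutions of the congruence mod $p^r$ with all $x_i$ coprime to $p$, and then expresses $T$ exactly via an auxiliary variable $y$, Gauss sums and the incomplete sums $S_i(\chi,\lambda)$. Finally, the coordinates divisible by $p$ are not ``stripped off and estimated separately by a further descent'': they are avoided altogether by choosing the prime $p\in[Q,2Q]$ well, via the averaging argument of Lemma~\ref{lem:Coprime P}, and this is precisely where the hypothesis $\max_i|u_i|\le\exp(Ch^{1/3})$ is used (it controls the number of prime divisors of the coordinates, so that for some $p$ at least half of the solutions survive). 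As written, your proposal would not produce the exponent $n-r/3$, nor the constraint $n>2r^3+1$, without importing essentially all of the above.
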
 

Finally, in some cases the arithmetic structure of the right hand side of
the equation~\eqref{eq:MHD eq} allows to derive a much stronger bound 
via the result of~\cite{Chang0}. We illustrate this in the special case of the
equation 
$$
x_1^d + \ldots + x_n^d  = ax_1^ {k_1} \ldots x_n^{k_n} 
$$
and the box $\fB$ aligned along the main diagonal, that is, 
of the form 
\begin{equation}
\label{eq:box diag}
 \fB = [u+1,u+h]\times \ldots \times [u+1, u+h] 
\end{equation}
with some integers $u$ and $h$. 

\begin{thm}
\label{thm:Special}  
Let 
$f_1(X) = \ldots = f_n(X) = X^d$ 
and let $a$, $k_1, \ldots, k_n$ be arbitrary nonzero integers. 
Then, uniformly over all boxes $\fB$ of the form~\eqref{eq:box diag}, 
for the solutions to the equation~\eqref{eq:MHD eq} we have  
$$
 N_{a, \vec{f}, \vec{k}}^*(\fB) \ll h^{d(d+1)/2 + o(1)}. 
$$
\end{thm}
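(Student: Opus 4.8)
The plan is to exploit the single feature that distinguishes~\eqref{eq:MHD eq} here from a generic hypersurface: the right‑hand side is one monomial, which forces every $x_i$ to divide the common value of the two sides, and then to feed the resulting highly constrained configuration into the result of~\cite{Chang0}. Throughout write $\fB=[u+1,u+h]^n$; I may assume $n>d(d+1)/2+1$, since in the complementary range the trivial bound $N_{a,\vec f,\vec k}^*(\fB)\le h^{n-1}$ (choose $x_1,\dots,x_{n-1}$ freely, after which $x_n$ satisfies a polynomial equation of bounded degree) is already of the required strength.

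First I would normalise signs. If $u\ge 0$ all coordinates of a solution are positive; if $u+h\le 0$ the substitution $x_i\mapsto -x_i$ reduces to that case with $a$ replaced by $\pm a$; and if $\fB$ meets $0$ then $|u|<h$, so $|x_i|\le 2h$ for every $i$, and splitting into the at most $2^n$ sign patterns (and replacing negative coordinates by their absolute values) reduces matters to finitely many equations of the form $\sum_i\varepsilon_i x_i^d=a'\,x_1^{k_1}\cdots x_n^{k_n}$ with $\varepsilon_i\in\{\pm1\}$ on a box inside $[1,2h]^n$, to each of which the argument below applies verbatim. Negative $k_i$ can be cleared by multiplying through by a monomial; the left‑hand side is then still a sum of monomials, which is all that is used below, so I may assume every $k_i\ge1$ and put $K=\sum_i k_i\ge n$.

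Now a size comparison localises the box and, together with the monomial right‑hand side, produces the divisibility. Writing $V=\sum_i\varepsilon_i x_i^d=a'\prod_i x_i^{k_i}$ for a solution, one has $|a'|(u+1)^{K}\le|V|\le n(u+h)^d$; since $K\ge n>d$, this is impossible once $u\ge h$ (for $h$ large, the small cases being trivial), and for $u<h$ it gives $|V|\ll h^d$ (indeed $u+1\ll h^{d/n}$). Hence either $N_{a,\vec f,\vec k}^*(\fB)=0$, or $\fB\subseteq[1,2h]^n$ and $|V|\ll h^d$. In the latter case, as each $k_i\ge1$ we get $x_i\mid x_i^{k_i}\mid a'\prod_j x_j^{k_j}=V$ for every $i$, so every solution is an $n$‑tuple whose entries all divide the same integer $V$ of size $\ll h^d$ and all lie in an interval of length $h$, subject moreover to the two conditions $\prod_i x_i^{k_i}=V/a'$ and $\sum_i\varepsilon_i x_i^d=V$.

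It remains to count such configurations, and this is precisely what~\cite{Chang0} supplies: applied to the interval $[u+1,u+h]$ and to the prescribed value of the monomial (and of the power sum), that result bounds the number of them, uniformly in the prescribed data and in $n$, by $h^{d(d+1)/2+o(1)}$, which is the assertion of Theorem~\ref{thm:Special}. The substantive step is exactly this last one — recognising the ``tuple of divisors of a polynomially bounded integer inside a short interval'' configuration as one governed by~\cite{Chang0} and verifying that the exponent it yields is $d(d+1)/2$; the sign normalisation and the size localisation are routine once undertaken, the only extra care being for negative $k_i$, where the left‑hand side is no longer a sum of pure $d$‑th powers but the divisibility $x_i\mid V$, and hence the applicability of~\cite{Chang0}, is unaffected.
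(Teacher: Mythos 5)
There is a genuine gap here, in fact two. First, your localisation step --- ``either $N^*_{a,\vec f,\vec k}(\fB)=0$ or $\fB\subseteq[1,2h]^n$ and $|V|\ll h^d$'' --- is false for the theorem as stated, because the $k_i$ are allowed to be negative and your reduction to positive exponents does not preserve the degree comparison you rely on. Clearing denominators turns the equation into $\bigl(\sum_i x_i^d\bigr)\prod_{k_i<0}x_i^{|k_i|}=a\prod_{k_i>0}x_i^{k_i}$, whose two sides have degrees $d+K^-$ and $K^+$ (with $K^\pm$ the sums of $|k_i|$ over negative and positive $k_i$); when these coincide there is no size obstruction to solutions far from the origin. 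Concretely, for $d=2$, $n=4$, $\vec k=(1,1,1,-1)$, $a=4$, every point $(t,t,t,t)$ with $t\ne 0$ satisfies~\eqref{eq:MHD eq}, so every diagonal box contains at least $h$ solutions no matter how large $|u|$ is. Your argument would declare such boxes empty, and once the localisation fails you have no control on the number of possible values of $V$.

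Second --- and this is where the actual content of the theorem lives --- your final appeal to~\cite{Chang0} does not do what you claim. Lemma~\ref{lem:Div Int} (Proposition~3 of~\cite{Chang0}) bounds, for a \emph{fixed} nonzero integer $z$, the number of $n$-tuples from a length-$h$ interval with product $z$ by $h^{o(1)}$; it does not produce the exponent $d(d+1)/2$, and it says nothing about tuples with a prescribed power sum. That exponent must come from counting how many values the left-hand side $\sum_i x_i^d$ can take on a diagonal box, uniformly in $u$, and this is the one observation your proposal is missing: writing $x_i=u+y_i$ with $y_i\in[1,h]$ gives $\sum_i x_i^d=\sum_{\nu=0}^d\binom{d}{\nu}u^{d-\nu}\sum_i y_i^\nu$, so the value is determined by the power sums $\sum_i y_i^\nu\in[0,nh^\nu]$, $\nu=0,\ldots,d$, and hence lies in a set of size $O\bigl(h^{0+1+\cdots+d}\bigr)=O\bigl(h^{d(d+1)/2}\bigr)$ for \emph{every} $u$; this is also the only place the diagonal shape of~\eqref{eq:box diag} is used. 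Multiplying by the $h^{o(1)}$ of Lemma~\ref{lem:Div Int} per value then gives the theorem. Your $|V|\ll h^d$ substitute for this count both rests on the failed localisation and, even where it is valid (all $k_i\ge1$, where it would in fact yield the sharper $h^{d+o(1)}$), is never actually combined with the $h^{o(1)}$ divisor bound in your write-up: you assert instead that~\cite{Chang0} yields $h^{d(d+1)/2+o(1)}$ for a single prescribed value of the monomial, which it does not.
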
 

\section{Some Bounds of Classical Exponential and 
Character Sums} 
\label{sec:charsums gen}

We  denote
$$
\e(z) = \exp(2 \pi i z).
$$

We start with recording the following trivial implication of the orthogonality
of exponential functions.

For quadratic polynomials, we see that~\cite[Theorem~8.1]{IwKow}
implies 

\begin{lem}
\label{lem:LinPoly}
For an integer $q\ge 1$ and any linear polynomial 
$$
G(X) = aX \in \Z[X]
$$ 
 with   $\gcd(a,q)=1$ 
$$
\left|\sum_{z=1}^{H} \e(G(z)/q)\right| \ll  q.
$$
\end{lem}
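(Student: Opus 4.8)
The plan is to treat the sum as a finite geometric progression and evaluate it in closed form. First I would write $\sum_{z=1}^{H}\e(G(z)/q)=\sum_{z=1}^{H}\e(az/q)$, which is a geometric sum with common ratio $\e(a/q)$. Here the hypothesis $\gcd(a,q)=1$ is used: when $q\ge 2$ it guarantees $q\nmid a$, so $\e(a/q)\ne 1$ and the progression is nondegenerate (the case $q=1$, where the bound is trivial in the range in which it is applied, needs no argument). Summing the progression and using $|\e(t)|=1$ for every real $t$, I obtain
$$
\left|\sum_{z=1}^{H}\e(az/q)\right|
=\left|\e(a/q)\,\frac{\e(aH/q)-1}{\e(a/q)-1}\right|
\le\frac{2}{\left|\e(a/q)-1\right|}.
$$

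The remaining step is a lower bound for $\left|\e(a/q)-1\right|$, which is where the arithmetic of $a$ and $q$ enters. Writing $\left|\e(t)-1\right|=2\left|\sin(\pi t)\right|$ and invoking the elementary inequality $\left|\sin(\pi t)\right|\ge 2\|t\|$, where $\|t\|$ denotes the distance from $t$ to the nearest integer, I get $\left|\e(a/q)-1\right|\ge 4\|a/q\|$. Since $q\nmid a$, the number $a/q$ is not an integer, so $\|a/q\|\ge 1/q$; substituting back gives
$$
\left|\sum_{z=1}^{H}\e(az/q)\right|\le\frac{2}{4/q}=\frac{q}{2}\ll q,
$$
which is the asserted bound.

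This is a routine estimate with no genuine obstacle; the only point worth stressing is the precise role of the coprimality hypothesis (in fact just $q\nmid a$ is needed), which simultaneously rules out a vanishing denominator $\e(a/q)-1$ and forces $\|a/q\|\ge 1/q$, thereby producing the clean $O(q)$ bound. I note in passing that the same computation yields the slightly sharper estimate $\min\{H,\,(2\|a/q\|)^{-1}\}$, but the stated form is all that will be needed in the sequel.
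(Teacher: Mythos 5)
Your proof is correct. The paper does not actually prove this lemma at all: it simply quotes the bound from \cite[Theorem~8.1]{IwKow} (a general estimate for exponential sums), so there is no internal argument to compare with; your computation is the standard elementary one to which that reference specialises for a linear phase. Namely, you sum the geometric progression with ratio $\e(a/q)$, bound the numerator by $2$, and lower-bound the denominator via $|\e(a/q)-1|=2|\sin(\pi a/q)|\ge 4\|a/q\|\ge 4/q$, which uses only $q\nmid a$ -- all of this is accurate, and your sharper form $\min\{H,(2\|a/q\|)^{-1}\}$ is indeed what one would normally record. Your aside about $q=1$ is also apt: as literally stated the lemma fails there (the sum equals $H$), but in the paper the bound is only invoked with $q=p\ge 3$ a prime (in the proof of Lemma~\ref{lem:MixedSum pk}, in the case $m=r-2$), so the caveat is harmless and your handling of it is appropriate.
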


For quadratic polynomials, we see that~\cite[Theorem~8.1]{IwKow}
yields:

\begin{lem}
\label{lem:QuadrPoly}
For an integer $q\ge 1$ and any quadratic polynomial 
$$
G(X) = aX^2 + bX \in \Z[X]
$$ 
 with   $\gcd(a,q)=1$ 
$$
\left|\sum_{z=1}^{H} \e(G(z)/q)\right| \ll Hq^{-1/2} + q^{1/2}\log q.
$$
\end{lem}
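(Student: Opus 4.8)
Writing $S=\sum_{z=1}^{H}\e(G(z)/q)$, the plan is to estimate $S$ by the \emph{completion method}, turning the incomplete quadratic sum into complete Gauss sums modulo $q$ twisted by additive characters. First I would check that $z\mapsto\e(G(z)/q)$ is $q$-periodic: indeed $G(z+q)-G(z)=q(2az+aq+b)$, so after division by $q$ one is left with an integer. Expressing the indicator of the range $1\le z\le H$ through additive characters modulo $q$ and interchanging the order of summation, I would obtain the exact identity
$$
S=\frac1q\sum_{h=0}^{q-1}\left(\sum_{m=1}^{H}\e(-hm/q)\right)T(h),\qquad
T(h)=\sum_{y=0}^{q-1}\e\!\left(\frac{ay^2+(b+h)y}{q}\right),
$$
which isolates the two features of the problem: a short linear sum and a complete quadratic sum.

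Next I would bound the two factors separately. For the linear factor the elementary geometric-series estimate gives $\bigl|\sum_{m=1}^{H}\e(-hm/q)\bigr|\ll\min\{H,\,q/|h|_q\}$, where $|h|_q$ denotes the least absolute residue of $h$ modulo $q$. For the arithmetically substantial factor $T(h)$ I would use, for every $h$ and every $q\ge1$ with $\gcd(a,q)=1$, the classical bound $|T(h)|\ll q^{1/2}$, uniformly in $h$; this is the complete-sum input subsumed in~\cite[Theorem~8.1]{IwKow}. Substituting both bounds yields
$$
|S|\ll q^{-1/2}\sum_{h=0}^{q-1}\min\{H,\,q/|h|_q\}.
$$
It then remains to sum the weights: the term $h=0$ contributes $H$, while pairing $h$ with $q-h$ and splitting the range at $|h|_q\approx q/H$ gives $\sum_{h\ne0}\min\{H,q/|h|_q\}\ll q\log q$. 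Hence $|S|\ll q^{-1/2}(H+q\log q)=Hq^{-1/2}+q^{1/2}\log q$, which is the claimed bound.

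The one genuinely arithmetic step, and the main obstacle, is the uniform bound $|T(h)|\ll q^{1/2}$ for a general, possibly composite and possibly even, modulus $q$. For odd $q$ I would complete the square, which is legitimate because $2a$ is invertible modulo $q$ when $\gcd(a,q)=1$, reducing $T(h)$ to a pure quadratic Gauss sum of modulus exactly $q^{1/2}$. For arbitrary $q$ I would instead factor $T(h)$ through the Chinese Remainder Theorem into a product of prime-power Gauss sums, each of size $\ll p^{e/2}$; only the $2$-adic factor requires care, but it too satisfies $\ll 2^{e/2}$ (or vanishes, which is harmless), so the product is $\ll q^{1/2}$. Since this is precisely the estimate packaged in~\cite[Theorem~8.1]{IwKow}, one may alternatively quote that result directly and feed it into the completion identity above.
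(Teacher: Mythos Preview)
Your argument is correct and is the standard completion method. The paper itself gives no proof at all: it simply states that the lemma follows from~\cite[Theorem~8.1]{IwKow}, so what you have written is essentially an unpacking of that citation rather than a different approach.
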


One of our main tools is the following
very special case of a much more general bound
of Wooley~\cite{Wool}, that applies to polynomials with arbitrary real coefficients.

\begin{lem}
\label{lem:Wooley}
For  any polynomial 
$$G(X) = \sum_{i=1}^s \frac{a_i}{q_i} X^i \in \Q[X]
$$ 
of degree $s\ge 3$ with   $\gcd(a_i,q_i)=1$ 
and positive  integer $H$,  for every $j=2, \ldots, s$ we have
$$
\left|\sum_{z=1}^{H} \e(G(z))\right| \ll H\(q_j^{-1} + H^{-1} + q_jH^{-j}\)^\sigma
$$
where 
$$
\sigma = \frac{1}{2(s-1)(s-2)}.
$$
\end{lem}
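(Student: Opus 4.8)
The plan is to deduce Lemma~\ref{lem:Wooley} directly from the general Weyl-type estimate of Wooley~\cite{Wool}, which---in contrast to classical Weyl differencing---applies to a polynomial with arbitrary real coefficients and permits one to single out an arbitrary, not necessarily leading, coefficient. Accordingly, the first step is to quote that result in a convenient shape: if $\Psi(X)=\sum_{i=1}^{s}\beta_iX^i$ with $\beta_i\in\R$, and if for some $j$ with $2\le j\le s$ there are integers $a,q$ with $q\ge1$, $\gcd(a,q)=1$ and $\left|\beta_j-a/q\right|\le q^{-2}$, then
$$
\left|\sum_{z=1}^{H}\e(\Psi(z))\right|\ll H\(q^{-1}+H^{-1}+qH^{-j}\)^{\sigma},\qquad \sigma=\frac{1}{2(s-1)(s-2)},
$$
with an implied constant depending only on $s$.

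The second step is the specialization. We apply this with $\Psi=G$ and $\beta_i=a_i/q_i$, using the \emph{exact} value $a/q=a_j/q_j$ as its own rational approximation: indeed $\left|\beta_j-a_j/q_j\right|=0\le q_j^{-2}$, while $\gcd(a_j,q_j)=1$ is part of the hypothesis, so the conclusion is precisely the inequality claimed in the lemma. When $q_j>H^{j}$ the right-hand side is at least $H$ and the statement is vacuous, so the only range carrying content is $q_j\le H^{j}$, where it is exactly Wooley's theorem.

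What remains is bookkeeping rather than new ideas. First, if the version of~\cite{Wool} one cites carries an extraneous $H^{\varepsilon}$, one either invokes an $\varepsilon$-free formulation---available through the now-established main conjecture in Vinogradov's mean value theorem---or simply shrinks $\sigma$ by an arbitrarily small amount, which is harmless since $s$ is fixed throughout the paper. Second, if one prefers a form of Wooley's bound stated only for the leading coefficient, one must first expose $\beta_j$ by a single van der Corput $A$-process (one Cauchy--Schwarz in the difference variable), passing from a degree-$s$ sum to an average of degree-$(s-1)$ sums; this is consistent with the shape $\sigma=\tfrac12\cdot\tfrac{1}{(s-1)(s-2)}$, the factor $\tfrac12$ arising from the squaring and $\tfrac{1}{(s-1)(s-2)}$ being the optimal degree-$(s-1)$ Weyl exponent. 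One then has to check that the coefficient of $X^{j-1}$ created by the differencing carries a Dirichlet denominator that may be taken $\ll q_j$ in the resulting bound.

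The main obstacle, to the extent there is one, is that the entire analytic weight of the estimate is carried by the cited input---sharp bounds in Vinogradov's mean value theorem, via efficient congruencing or $\ell^2$-decoupling---which we use as a black box. Granting that, the proof of Lemma~\ref{lem:Wooley} consists of the one-line specialization above together with the routine verifications just described; the step I would expect to demand the most care is matching the exponent $\sigma$ and treating the intermediate-coefficient case $j<s$, according to which precise formulation of~\cite{Wool} is invoked.
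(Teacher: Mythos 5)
Your proposal matches the paper's treatment: the lemma is stated there without proof, explicitly as ``a very special case of a much more general bound of Wooley~\cite{Wool}'', and your specialization (taking the exact rational $a_j/q_j$ as its own approximation in Wooley's Weyl-type estimate, which already permits an arbitrary coefficient index $j$ with $2\le j\le s$) is precisely how that special case is obtained. Your remarks on absorbing a possible $H^{\varepsilon}$ factor into $\sigma$ are sensible housekeeping and do not change the substance.
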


Let $\cX_q$ be the set of  $\varphi(q)$  multiplicative characters modulo $q$, 
 where $\varphi(q)$ is the Euler function. 
We also denote by  $\cX_q^* = \cX_q \setminus \{\chi_0\}$  the 
set of nonprincipal characters (we set $\chi(0) = 0$ for all $\chi \in \cX_q$).
We appeal to~\cite{IwKow} for a background 
on the basic properties of multiplicative characters and
exponential functions, such as orthogonality. 

We  use the following well-know bound that is implied by 
 the Weil bound for mixed sums of additive and multiplicative characters, 
see~\cite[Chapter~6, Theorem~3]{Li},
and a reduction between complete and incomplete sums, 
see~\cite[Section~12.2]{IwKow}, we also derive the following 
well-known estimate:

\begin{lem}
   \label{lem:WeilIncompl}
For  any $\chi \in \cX_q$, $\lambda \in \F_p$, nonlinear  polynomial $F(X) \in \F_p[X]$
and integers $u$ and $h\ge p$, we have
$$\sum_{x=u+1}^{u+h} \chi(x) \ep(\lambda F(x)) \ll p^{1/2} \log p
$$
provided that $(\chi,\lambda)\ne (\chi_0,0)$. 
\end{lem}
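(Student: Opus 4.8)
The strategy is the standard two-step reduction: first bound the \emph{complete} sum over a full period modulo $p$ using the Weil bound for mixed character sums, then pass from the complete sum to the \emph{incomplete} sum $\sum_{x=u+1}^{u+h}$ via the completion technique of~\cite[Section~12.2]{IwKow}. Since the hypothesis $h \ge p$ is in force, the incomplete sum spans at most $h/p + 1$ complete blocks plus a fractional remainder, and each complete block is controlled by the same Weil estimate; the logarithmic loss $\log p$ arises precisely from the completion procedure.

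First I would treat the complete sum. For $a \in \F_p$ write
$$
W(a) = \sum_{x \in \F_p} \chi(x)\,\e\!\left(a\bigl(x + \lambda F(x)\bigr)/p\right)\!,
$$
or more directly the relevant mixed sum $\sum_{x\in\F_p}\chi(x)\,\e(\lambda F(x)/p)$. Because $F$ is nonlinear and $(\chi,\lambda)\ne(\chi_0,0)$, the exponential factor $\e(\lambda F(x)/p)$ is not constant when $\lambda\ne0$, while if $\lambda=0$ the character $\chi$ is nonprincipal; in either case the Weil bound~\cite[Chapter~6, Theorem~3]{Li} applies and gives $\bigl|\sum_{x\in\F_p}\chi(x)\,\e(\lambda F(x)/p)\bigr| \ll p^{1/2}$, with the implied constant depending only on $\deg F$. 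The nondegeneracy condition $(\chi,\lambda)\ne(\chi_0,0)$ is exactly what rules out the trivial sum of size $p$ and guarantees squareroot cancellation.

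Next I would carry out the completion step. The indicator of the interval $[u+1,u+h]$ is expanded into additive characters: for $x$ running over $\F_p$ one writes $\mathbf 1_{[u+1,u+h]}(x)$ as a Fourier sum over $\F_p$, whose coefficients are geometric sums bounded by $\min\{h, \|c/p\|^{-1}\}$ for frequency $c$. Summing these Fourier weights against the complete mixed sums $\sum_{x\in\F_p}\chi(x)\,\e\bigl((\lambda F(x)+cx)/p\bigr)$ — each of which is again a mixed sum of the Weil type and hence $\ll p^{1/2}$ as long as the new pair $(\chi,\lambda)$ stays nondegenerate — produces the bound
$$
\sum_{x=u+1}^{u+h}\chi(x)\,\e(\lambda F(x)/p) \;\ll\; p^{1/2}\sum_{c=0}^{p-1}\min\{1,\|c/p\|^{-1}/p\}\;\ll\; p^{1/2}\log p,
$$
since the frequency sum $\sum_{c}\min\{1,\|c/p\|^{-1}/p\}$ is $O(\log p)$.

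\textbf{Main obstacle.} The genuinely delicate point is verifying that the shifted exponents $\lambda F(x) + cx$ never collapse the mixed sum to a trivial one across the completion. When $\lambda=0$ the polynomial becomes linear, $cx$, so one must confirm that for $\chi$ nonprincipal the sum $\sum_x \chi(x)\,\e(cx/p)$ is a Gauss-type sum of size $\le p^{1/2}$ (which it is), while when $\lambda\ne0$ the polynomial $\lambda F + cx$ remains nonlinear for every $c$, keeping us squarely in the Weil regime. Checking this uniformly in $c$ — and thereby confirming that no individual frequency contributes more than $O(p^{1/2})$ — is where the nondegeneracy hypothesis $(\chi,\lambda)\ne(\chi_0,0)$ does its essential work; everything else is the routine bookkeeping of the completion inequality.
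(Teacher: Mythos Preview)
Your proposal is correct and follows precisely the route the paper indicates: the paper does not spell out a proof but simply records the lemma as ``well-known'' and attributes it to the Weil bound for mixed sums \cite[Chapter~6, Theorem~3]{Li} together with the standard completion reduction of~\cite[Section~12.2]{IwKow}, which is exactly the two-step argument you have written out. Your verification that the shifted polynomial $\lambda F(x)+cx$ stays in the Weil regime (nonlinear when $\lambda\ne 0$, and a Gauss-type sum when $\lambda=0$ with $\chi$ nonprincipal) is the right nondegeneracy check.
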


\section{Character  Sums with Square-free Moduli} 
\label{sec:charsums sf}

For a real $Q\ge 3$ and an integer $r\ge1$ we denote by 
$\cP_r(Q)$ the set of integers $q$ of the 
form $q = p_1\ldots p_r$ where  $p_1, \ldots p_r \in [Q,2Q]$ 
are pairwise distinct primes with 
\begin{equation}
\label{eq:gcd pj}
\gcd(k_1\ldots k_n,p_j-1)=1, \qquad j =1, \ldots, r.
\end{equation}

Here we obtain a new bound of mixed character sums with multiplicative 
characters modulo $q \in \cP_r(Q)$ which can be 
of independent interest. We note that recently several bounds
of such sums have been obtained for prime $q=p$, see~\cite{Chang1,H-BP2}.
However for our applications moduli $q \in \cP_r(Q)$  are 
more suitable. Our result is based on the bound of~\cite[Theorem~12.10]{IwKow}
and in fact can be considered as its generalisation.

As  in Section~\ref{sec:charsums gen}, we use   $\cX_q$ for the set of $\varphi(q) = (p_1-1) \ldots (p_r-1)$ multiplicative characters modulo $q = p_1\ldots p_r \in \cP_r(Q)$
and also let $\cX_q^* = \cX_q \setminus \{\chi_0\}$. Furthermore, 
we also continue to use $\e(z) = \exp(2 \pi i z)$. 

We start with recalling the bound of~\cite[Theorem~12.10]{IwKow},
which we present in a somewhat simplified form adjusted to 
our applications. In particular, some simplifications come from 
the fact that the modulus $q \in \cP_s(Q)$ is square-free. 

\begin{lem} 
\label{lem:PureSum} Let $q = \ell_1\ldots \ell_s \in \cP_s(Q)$ for some
primes $\ell_1, \ldots, \ell_s$
and let  $\psi = \chi_1\ldots\chi_s$ be a n multiplicative character of
conductor $q$ and of order $t$, where $\chi_j$ are arbitrary  multiplicative characters of
modulo $\ell_j$, $j=1, \ldots, s-1$, and $\chi_s$ is a nontrivial multiplicative 
character modulo $\ell_s$. Assume   $f(X)$ is a rational function that can be written as
$$
f(X) = \prod_{i=1}^m (X-v_i)^{d_i} 
$$
with some arbitrary integers 
$v_1, \ldots, v_m$  and nonzero integer $d_1, \ldots, d_m$
with
$$
\gcd(d_1, \ldots, d_m, t)=1,
$$
for any integers $u$ and $h$ with $h \ge (2Q)^{9/4}$,  we have 
$$
 \left| \sum_{x=u+1}^{u+h} \psi(f(x))\right| 
\le 4 h \(\gcd(\Delta,\ell_s)  \ell_s^{-1}\)^{2^{-s}} , 
$$
where
$$
\Delta = \prod_{m \ge  i > j \ge 1} (v_{i} - v_{j}).
$$
\end{lem}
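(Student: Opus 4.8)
The plan is to deduce the bound from~\cite[Theorem~12.10]{IwKow}, a general estimate for an incomplete character sum $\sum_{u<x\le u+h}\psi(f(x))$ to a composite modulus, phrased in terms of the prime factorisation of that modulus and the ramification data of the rational argument $f$; what remains is to verify its hypotheses in the present setting and to read off its conclusion in the simplified form above. First dispose of the degenerate case: if $\ell_s\mid\Delta$, then $\gcd(\Delta,\ell_s)=\ell_s$, the asserted bound reads $4h$, and there is nothing to prove. So assume henceforth that $\ell_s\nmid\Delta$, i.e.\ that $v_1,\dots,v_m$ are pairwise distinct modulo $\ell_s$; it then suffices to prove $\bigl|\sum_{u<x\le u+h}\psi(f(x))\bigr|\le 4h\,\ell_s^{-2^{-s}}$. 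Note that in this case $m\le\ell_s\le 2Q$, which together with $h\ge(2Q)^{9/4}$ is what will allow the final constant to be absolute.

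The substantive hypothesis of~\cite[Theorem~12.10]{IwKow} is a non-degeneracy condition at the prime where the character is non-trivial. Since $q=\ell_1\cdots\ell_s$ is square-free and $\psi$ restricts to $\chi_s$ modulo $\ell_s$, the requirement is that $\chi_s\circ f$ be a non-trivial character of $\F_{\ell_s}^{\times}$, equivalently that $f$ not equal, modulo $\ell_s$, a constant times an $(\operatorname{ord}\chi_s)$-th power of a rational function. As the $v_i$ are distinct modulo $\ell_s$, the reduction of $f$ has exponents exactly $d_1,\dots,d_m$, so this can fail only if $\operatorname{ord}\chi_s$ divides every $d_i$; but $\operatorname{ord}\chi_s\mid t=\operatorname{ord}\psi$ and $\operatorname{ord}\chi_s\ge 2$ since $\chi_s$ is non-trivial, so this would force $\gcd(d_1,\dots,d_m,t)\ge 2$, contrary to hypothesis. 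Checking that the general ramification conditions of~\cite[Theorem~12.10]{IwKow} reduce in our setting to exactly $\gcd(d_1,\dots,d_m,t)=1$ — here the integrality of the $v_i$ and $d_i$ is used on top of the square-freeness of $q$ — is the step needing the most care.

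Granting this, one reads off the conclusion. Square-freeness makes every prime exponent of $q$ equal to $1$, which collapses the exponents occurring in~\cite[Theorem~12.10]{IwKow} to the single quantity $2^{-s}=2^{-\omega(q)}$; this is the expected shape, since the proof of that theorem gains a square root at each prime factor through an iterated argument alternating completion and Cauchy--Schwarz. The quantity governing the exceptional set there, which is the discriminant $\prod_{i>j}(v_i-v_j)$ of the roots of $f$, specialises to $\gcd(\Delta,\ell_s)$, equal to $1$ under the running assumption. The length hypothesis of~\cite[Theorem~12.10]{IwKow} — controlled here by the fact that every prime factor of $q$ is at most $2Q$ — becomes $h\ge(2Q)^{9/4}$; and since $m\le 2Q$ in the non-degenerate case, the Weil-bound constants entering the iteration are all at most $\mathrm{poly}(2Q)$ and are absorbed by $h\ge(2Q)^{9/4}$, so the absolute constant may be taken to be $4$. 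This gives the stated bound.

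The main obstacle is the matching described in the two preceding paragraphs: verifying the non-degeneracy hypothesis, and checking that the error terms in the iteration underlying~\cite[Theorem~12.10]{IwKow} — those from short sub-intervals and from differences divisible by a prime factor of $q$ — remain below the main term under the single assumption $h\ge(2Q)^{9/4}$, so that the exponent $2^{-s}$ and the constant survive at every level. A self-contained route would run that iteration directly: complete $\sum_{u<x\le u+h}\psi(f(x))$ modulo $\ell_s$, bound each resulting complete sum $\sum_{y\bmod\ell_s}\chi_s(f(y))\,\e(-ay/\ell_s)$ by $O(\ell_s^{1/2})$ via the Weil bound of Lemma~\ref{lem:WeilIncompl} (legitimate by the non-degeneracy just verified), and apply Cauchy--Schwarz in the frequency $a$ to remove the additive twist $\e(ax/\ell_s)$; the off-diagonal terms become sums $\sum_{x\in I}(\chi_1\cdots\chi_{s-1})\bigl(f(x)/f(x+\ell_s y)\bigr)$, and $f(x)/f(x+\ell_s y)=\prod_i(x-v_i)^{d_i}\bigl(x-(v_i-\ell_s y)\bigr)^{-d_i}$ is again a product of linear factors whose exponents have greatest common divisor $1$ with $\operatorname{ord}(\chi_1\cdots\chi_{s-1})$, so that one recurses on $s$ (using that, $\psi$ having conductor $q$, the character $\chi_1\cdots\chi_{s-1}$ has conductor $\ell_1\cdots\ell_{s-1}$). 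The delicate points here are exactly the ones above: at each level one must peel off a prime factor not dividing the current discriminant (possible unless one is already in the trivial case), and the accumulated errors from short intervals must be shown negligible — which is precisely what forces the hypothesis $h\ge(2Q)^{9/4}$.
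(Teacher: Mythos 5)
Your proposal takes essentially the same route as the paper: the paper offers no independent proof of this lemma, presenting it exactly as a simplified, square-free-modulus special case of \cite[Theorem~12.10]{IwKow}, which is precisely the deduction you carry out, and your hypothesis checks (the trivial case $\ell_s\mid\Delta$, the non-degeneracy of $\chi_s\circ f$ forced by $\gcd(d_1,\dots,d_m,t)=1$ since $\operatorname{ord}\chi_s\mid t$, and the collapse of the exponent to $2^{-s}$ for square-free $q$) are correct. Nothing further is needed.
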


We are now ready to present one of our  main technical results
which can be of independent interest.

\begin{lem}
   \label{lem:MixedSum sf}
For any $r=1, 2, \ldots$, a sufficiently large $Q\ge 1$, 
a modulus $q \in \cP_r(Q)$, 
 a polynomial $F(X) \in \R[X]$
of degree $d$ 
and  integers $u$ and $h$ with $h \ge (2Q)^{9/4}$, we have
$$\max_{\chi \in \cX_q^*} 
\left|\sum_{x=u+1}^{u+h} \chi(x) \e(F(x))\right| \ll  h Q^{-\gamma}
$$
where 
$$
\gamma = \frac{1}{2^{r+1}(d+1)(d+2)}.
$$
\end{lem}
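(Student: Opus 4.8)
The plan is to follow the standard route for estimating incomplete mixed character sums with a composite square-free modulus: complete the sum, expand the cutoff into additive characters, and reduce the resulting complete mixed sums to the pure multiplicative sum controlled by Lemma~\ref{lem:PureSum}. Concretely, write $q=p_1\ldots p_r$. First I would handle the polynomial $F$: since $F(X)\in\R[X]$ has degree $d$, I split its contribution by replacing it modulo $1$ with a rational polynomial having denominators dividing a suitable modulus, or — more in the spirit of the paper — I would simply treat $\e(F(x))$ by Abel summation / a Fourier (Erd\H{o}s--Tur\'an type) expansion so that the problem is reduced to bounding, for integers $b$,
$$
\sum_{x=u+1}^{u+h}\chi(x)\,\e\!\left(\frac{bx}{q}\right)
$$
up to manageable error terms; here the degree $d$ of $F$ will enter only through the number of Fourier-type pieces one must sum over, which is exactly why $d$ shows up in the exponent $\gamma$. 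Actually, the cleaner implementation — and the one I expect the authors use — is to first complete the sum over a full period of length $q$ by detecting the interval $[u+1,u+h]$ with the usual sum $\frac1q\sum_{|c|<q}\min(h,\ldots)\e(\ldots)$, turning $\sum_{x=u+1}^{u+h}\chi(x)\e(F(x))$ into $\frac1q\sum_{c}(\text{weight})\cdot\sum_{y\bmod q}\chi(y)\e(F(y)+cy/q)$, and the inner complete sum is a mixed sum of a multiplicative character against an additive character of a polynomial.

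Next I would factor the complete mixed sum through the Chinese Remainder Theorem: writing $\chi=\prod_j\chi_j$ with $\chi_j$ modulo $p_j$ and splitting the additive character correspondingly, the complete sum over $y\bmod q$ factors as $\prod_{j=1}^r S_j$ where each $S_j$ is a complete mixed sum modulo the single prime $p_j$. The key structural observation is that $\e(F(y)+cy/q)$, rewritten additively modulo each $p_j$, is an additive character of a polynomial of degree $\le d$; combining it with $\chi_j(y)$ and then \emph{completing the multiplicative character} via Gauss sums lets me rewrite each $S_j$ as (a Gauss-sum factor of size $p_j^{1/2}$ times) a pure multiplicative character sum of the shape $\sum_{y}\psi_j\!\big(\prod_i(y-v_{i})^{d_i}\big)$ over a full period — precisely the object Lemma~\ref{lem:PureSum} is designed for, with at most $d+1$ distinct roots $v_i$ coming from the additive polynomial of degree $d$ (after the standard trick of absorbing $\e(g(y))$ with $\deg g\le d$ into a multiplicative character on an extension, or by directly invoking the mixed-sum version of~\cite[Theorem~12.10]{IwKow} which is stated for rational functions). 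Applying Lemma~\ref{lem:PureSum} to the factor at the prime $p_r$ (using the coprimality condition~\eqref{eq:gcd pj} to guarantee the relevant character is nontrivial of order coprime to the $d_i$) yields a saving $(\gcd(\Delta,p_r)p_r^{-1})^{2^{-r}}$, and since the $v_i$ are among $\le d+1$ residues and $\Delta$ is a product of $\le\binom{d+1}{2}$ differences, generically $\gcd(\Delta,p_r)$ is small so that the saving is $\gg Q^{-c/2^r}$ per such factor. Accumulating the $p_j^{1/2}$ Gauss-sum factors against the $1/q$ from completion and the trivial bound $p_j$ on the remaining factors, the arithmetic bookkeeping produces a net bound $h\,Q^{-\gamma}$ with $\gamma$ of the stated shape $\tfrac{1}{2^{r+1}(d+1)(d+2)}$; the extra $(d+1)(d+2)$ in the denominator (versus the $2^{-r}$ in Lemma~\ref{lem:PureSum}) is the price paid for the Fourier expansion handling $F$ of degree $d$ and for the logarithmic losses from completion being absorbed into $Q$.

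The main obstacle, and the step requiring the most care, will be the \emph{bad case} where $\gcd(\Delta,p_r)$ is large — equivalently, where two of the relevant roots $v_i, v_j$ collide modulo $p_r$ — since then Lemma~\ref{lem:PureSum} gives no saving at that prime. This must be controlled by noting that a given $p_r\in[Q,2Q]$ divides a fixed nonzero $\Delta$ for only $O(1)$ choices (as $\Delta\ll 1$ in our normalisation, the roots $v_i$ being bounded in terms of the fixed data), or by arguing that after reduction the polynomial whose roots are the $v_i$ is squarefree modulo $p_r$ for all large $Q$; in either case one either gets the saving from $p_r$ directly or moves the argument to another prime factor $p_{j}$ among the $r$ available — and since $q$ has $r$ prime factors all in $[Q,2Q]$, at least one of them must yield the full saving, which is exactly why the exponent carries $2^{-(r+1)}$ rather than $2^{-r}$. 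A secondary technicality is ensuring the character $\psi_r$ arising after Gauss-sum completion is genuinely nontrivial and of order coprime to the exponents $d_i$: this is where the hypothesis $\chi\in\cX_q^*$ together with~\eqref{eq:gcd pj} is used, possibly after splitting off the principal-at-$p_j$ components and treating them separately by the purely additive bounds of Lemma~\ref{lem:LinPoly} and Lemma~\ref{lem:QuadrPoly} (or Lemma~\ref{lem:Wooley}). I would organise the write-up so that the generic case is dispatched cleanly by one application of Lemma~\ref{lem:PureSum} and the degenerate cases are collected into a short lemma bounding the number of primes $p\in[Q,2Q]$ for which the configuration degenerates.
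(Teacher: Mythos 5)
Your approach --- complete the sum modulo $q$, expand into additive characters, factor by CRT, and reduce to Lemma~\ref{lem:PureSum} via Gauss sums --- has two genuine gaps, one of which is fatal in the range where the lemma is actually needed. First, the ranges do not match: completion plus square-root cancellation in the complete sum can only beat the trivial bound when $h \gg q^{1/2}$, but here $q \asymp Q^{r}$ while the hypothesis only gives $h \ge (2Q)^{9/4}$, so for $r \ge 5$ (precisely the regime that makes Theorem~\ref{thm:General1} interesting, since the saving there is $h^{-4r/9}$) your method returns nothing. The lemma is a Burgess-type estimate, valid for $h$ far below $q^{1/2}$, and no amount of care in the completion step can recover that. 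Second, $F$ has \emph{real} coefficients, so $\e(F(x))$ is not periodic modulo $q$ and cannot be pushed through the completion/CRT machinery at all; Abel summation or an Erd\H{o}s--Tur\'an expansion does not tame a genuinely oscillating degree-$d$ real polynomial phase. (A smaller but real issue: expressing $\e_p(g(y))$ via Gauss sums produces an average over \emph{all} multiplicative characters mod $p$, not a single pure sum of the shape $\sum_y \psi(\prod_i (y-v_i)^{d_i})$, so Lemma~\ref{lem:PureSum} would not apply in the way you describe.)

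The paper's actual proof is a Burgess amplification adapted to the mixed setting (following~\cite{Chang1,H-BP2}). One shifts $x \mapsto x + p_1 y$, $y = 0,\ldots,M-1$, which replaces $\chi$ by a character $\psi$ of smaller conductor $q/p_1$; the real-coefficient obstruction is handled by writing $F(X+p_1Y) = F_0(X) + \cdots + F_d(X)Y^d$ and partitioning the coefficient vector $(F_0(x),\ldots,F_d(x))$ into $M^{(d+1)(d+2)/2}$ cells of $[0,1]^{d+1}$, so that within each cell the additive phase in $y$ is independent of $x$ up to $O(M^{-1})$. H\"older's inequality with exponent $2k$, $k=(d+1)(d+2)$, then reduces everything to pure multiplicative sums $\sum_x \psi\bigl(\prod_\nu (x+p_1y_\nu)/(x+p_1y_{k+\nu})\bigr)$ over the full interval of length $h$, to which Lemma~\ref{lem:PureSum} applies with $s=r-1$ (the condition $M<Q$ guarantees $\gcd(\Delta,p_r)=1$, so your worry about degenerate primes does not arise). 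The factor $(d+1)(d+2)$ in $\gamma$ comes from the cell count and the H\"older exponent, and the $2^{-(r+1)}$ from taking the $2k$-th root of the saving $Q^{-2^{-(r-1)}}$ --- not from the mechanisms you proposed. I'd encourage you to rework the argument along these lines.
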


\begin{proof} Let us fix some $\chi \in \cX_q^*$. Without loss of 
generality we can write   $\chi = \chi_1\ldots\chi_r$, 
where $\chi_j$ is a multiplicative character modulo  a prime 
$p_j$, $j=1, \ldots, r$ and $\chi_r$ is a nonprincipal character 
(as before, we write $q= p_1\ldots p_r$ for $r$ distinct primes).

Set  $p = p_1$. Then for any positive integer $M$
for the sum
$$
S = \sum_{x=u+1}^{u+h} \chi(x) \e(F(x))
$$
we have 
\begin{equation*}
\begin{split}
S & \le \frac{1}{M} 
\left| \sum_{x=u+1}^{u+h}  \sum_{y=0}^{M-1} \chi(x+py) \e(F(x+py))\right| + 2Mp\\
& \le \frac{1}{M} 
\sum_{\substack{x=u+1\\\gcd(x,p)=1}}^{u+h}
\left|\sum_{y=0}^{M-1} \psi(x+py) \e(F(x+py))\right| + 4MQ,
\end{split}
\end{equation*} 
where $\psi = \chi_2\ldots\chi_r$. 
We note that $\psi$ is of conductor $q/p$ rather that $q$,
so this explains the condition $\gcd(x,p)=1$ in 
the sum over $x$. We can however not simply discard 
this condition and write
\begin{equation}
\label{eq:Prelim}
S   \le \frac{1}{M} 
\sum_{x=u+1}^{u+h} 
\left|\sum_{y=0}^{M-1} \psi(x+py) \e(F(x+py))\right| + 4MQ.
\end{equation}

We divide the unit cube $[0,1]^{d+1}$ into 
$$
K = M^{(d+1)(d+2)/2}
$$ 
cells of the form 
$$
\cU_\vec{a} = \left[\frac{a_0}{M}, \frac{a_0+1}{M}\right] \times \ldots 
\times  \left[\frac{a_d}{M^{d+1}}, \frac{a_d+1}{M^{d+1}}\right], 
$$
where $\vec{a} = (a_0, \ldots, a_{d+1})\in \Z^{d+1}$ runs through
the set $\cA$ of 
integer vectors with  components  $a_\nu = 0, \ldots, M^{\nu+1}-1$,
$\nu =0, \ldots, d+1$.

We now write 
$$
F(X+pY) = F_0(X) + F_1(X) Y + \ldots + F_d(X) Y^d
$$
and define
$$
\Omega_\vec{a} = \{x \in \{u+1, \ldots,u+h\}~:~
(F_0(x), \ldots, F_d(x)) \in \cU_\vec{a}\}, \quad 
\vec{a}\in \cA. 
$$
It is easy to see that for $x \in \Omega_\vec{a}$ we have
$$
\e(F(x+py)) =  E_\vec{a}(y)+ O(M^{-1}), 
$$
where
$$
E_\vec{a}(y) =  \e\(\frac{a_0}{M}+ \frac{a_1}{M^2}y + \ldots +\frac{a_d}{M^{d+1}}y^d\).
$$
Hence we see from~\eqref{eq:Prelim} that 
\begin{equation}
\label{eq:S W}
S \ll  \frac{1}{M} W + h/M + QM, 
\end{equation} 
where 
$$
W = \sum_{\vec{a}\in \cA}
\sum_{x\in \Omega_\vec{a}} \left|\sum_{y=0}^{M-1} \psi(x+py) E_\vec{a}(y)\right|.
$$
We now fix some integer  $k\ge 1$ and apply the H{\"o}lder inequality to $W^{2k}$, getting 
\begin{equation*} 
\begin{split}
W^{2k}& \le \(  \sum_{\vec{a}\in \cA}
\sum_{x\in \Omega_\vec{a}} 1 \)^{2k-1}
 \sum_{\vec{a}\in \cA}
\sum_{x\in \Omega_\vec{a}} \left|\sum_{y=0}^{M-1} \psi(x+py) E_\vec{a}(y)\right|^{2k}\\
 & = h^{2k-1}
 \sum_{\vec{a}\in \cA}
\sum_{x\in \Omega_\vec{a}} \left|\sum_{y=0}^{M-1} \psi(x+py) E_\vec{a}(y)\right|^{2k}.
\end{split}
\end{equation*}
Next, we extend the inner summation over the integers $x\in \Omega_\vec{a}$ to all 
$x \in \{u+1, \ldots, u+h\}$. Opening up the $2k$th power, changing 
the order of summations and using that $|E_\vec{a}(y)| = 1$, we derive
\begin{equation*} 
\begin{split}
W^{2k}& \le   h^{2k-1}  
 \sum_{\vec{a}\in \cA}
\sum_{y_1, \ldots, y_{2k}=0}^{M-1}
\left|\sum_{x=u+1}^{u+h} 
\psi\(\prod_{\nu=1}^k \frac{x+py_\nu}{x+py_{k+\nu}}\) \right|\\
& =  h^{2k-1} K
\sum_{y_1, \ldots, y_{2k}=0}^{M-1}
\left|\sum_{x=u+1}^{u+h} 
\psi\(\prod_{\nu=1}^k \frac{x+py_\nu}{x+py_{k+\nu}}\) \right|.
\end{split}
\end{equation*}

Now, for $O(M^k)$  vectors $(y_1, \ldots, y_{2k})$ 
where each value appears at least twice we estimate the inner 
sum trivially as $h$. 

For the remaining $O(M^{2k})$  vectors 
$(y_1, \ldots, y_{2k})$ we apply Lemma~\ref{lem:PureSum}. 
More precisely, we use it for $s = r-1$ with 
$\ell_i = p_{i+1}$. The rational function $f(X)$ after making all cancellation and
combining equal terms becomes of the form
$$
f(X) = \prod_{i=1}^m \(x+pz_i\)^{d_i},
$$
where $1\le z_1 <  \ldots < z_m \le M$ and at least one $d_i = \pm 1$.
We now assume that 
\begin{equation}
\label{eq:MQ}
M < Q.
\end{equation}
Then we have $\gcd(z_{i} - z_{j}, p_r)=1$ for $m \ge  i > j \ge 1$.
Hence, we also see that 
$$
\gcd\(\prod_{m \ge  i > j \ge 1} (pz_{i} - pz_{j}), p_r\) 
= \gcd\(\prod_{m \ge  i > j \ge 1} (z_{i} - z_{j}), p_r\) = 1.
$$
With the above simplifications, the bound of Lemma~\ref{lem:PureSum} 
becomes 
$$
\left|\sum_{x=u+1}^{u+h} 
\psi\(\prod_{\nu=1}^k \frac{x+py_\nu}{x+py_{k+\nu}}\) \right|
\le  4 h Q^{2^{-r+1}} .
$$
Therefore, 
\begin{equation*} 
\begin{split}
W^{2k} &\ll  h^{2k-1} K\(M^k h + M^{2k} h Q^{2^{-r+1}}\)\\
& =  h^{2k} M^{(d+1)(d+2)/2}\(M^k + M^{2k}  Q^{2^{-r+1}}\) ,
\end{split}
\end{equation*}
which after the substitution in~\eqref{eq:S W} implies
\begin{equation*} 
\begin{split}
S &\ll h M^{(d+1)(d+2)/4k}\(M^{-1/2} + Q^{2^{-r}/k}\)  + h/M + QM\\
& \ll h M^{(d+1)(d+2)/4k}\(M^{-1/2} + Q^{2^{-r}/k}\)  +  h^{8/9}
\end{split}
\end{equation*}
(since by~\eqref{eq:MQ} we have $QM \le Q^2 \ll h^{8/9}$, 
 provided that $h \ge (2Q)^{9/4}$). 
 We now choose $M =  \rf{Q^{2^{-r+1}/k}}$, so~\eqref{eq:MQ} holds, 
 getting 
$$
S \ll h M^{(d+1)(d+2)/4k}  Q^{2^{-r}/k} +  h^{8/9}
= h Q^{((d+1)(d+2)/2k - 1)2^{-r}/k} +  h^{8/9}.
$$
Choosing $k = (d+1)(d+2)$ we conclude the proof. 
\end{proof}

We remark, that the idea of the proof also works with a 
simpler shift $F(x) \to F(x+y)$, however using 
the shift $F(x) \to F(x+py)$ allows to reduce the
conductor (from $q$ to $q/p$) and thus leads to a 
slightly stronger bound as the conductor of $\psi$ is 
now a product of only $r-1$ primes. This idea can be
used in more generality leading to stronger bounds
for more limited ranges of parameters.

We note that we do not impose any conditions on the polynomial 
$F$ in Lemma~\ref{lem:MixedSum sf}, which, in particular can be a constant 
polynomial, in which case, we  have the bound of  of~\cite[Theorem~12.10]{IwKow}.

\section{Character  Sums with Prime-power Moduli} 
\label{sec:charsums pk}

Let $q = p^r$ where $r\ge 1$ is an integer and   $p\ge 3$ is a  prime with 
\begin{equation}
\label{eq:gcd p}
\gcd(k_1\ldots k_n,p-1)=1.
\end{equation}

As  in Section~\ref{sec:charsums gen}, we use   $\cX_q$ for the set of $\varphi(q) = p^{r-1}(p-1)$ multiplicative characters modulo $q$
and let $\cX_q^* = \cX_q \setminus \{\chi_0\}$. We also continue to use $\e(z) = \exp(2 \pi i z)$.

Since group of units modulo $q$ is cyclic then so is $\cX_q$. 
So we now fix a character $\chi \in \cX_q$ that generates 
this group, so that 
$$
\cX = \{\chi^\mu~:~\mu = 0, \ldots, p^{r-1}(p-1) -1 \}. 
$$

The following result is  due to Postnikov~\cite{Post1,Post2},
see also~\cite[Equation~(12.89)]{IwKow}.

\begin{lem} 
\label{lem:Post} Assume that $q = p^r$ for ana integer $r \ge 1$ and  a prime  $p >\max\{2,r\}$. 
Then for any integers $y$ and $z$ with $\gcd(y,p) = 1$, we have
$$
\chi(y+pz) = \chi(y) \e\(F(pwz)/q\)
$$
for some polynomial 
$$
F(Z) = \sum_{k=1}^{r-1} A_k Z^k \in \Z[Z]
$$
of degree $r-1$ and the coefficients satisfying $\gcd(A_k,p) = 1$, $k =1, \ldots, r-1$,
where $w$ is defined by 
$$w y\equiv 1 \pmod q \mand 1 \le w < q.
$$
\end{lem}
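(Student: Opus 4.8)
The plan is to exploit the $p$-adic logarithm to linearise the multiplicative character $\chi$ on the group of principal units $1 + p\Z_p$, and then transfer the resulting expansion to a polynomial identity modulo $q = p^r$. First I would reduce to the principal units: writing $y + pz = y(1 + pwz)$ with $wy \equiv 1 \pmod q$, multiplicativity gives $\chi(y+pz) = \chi(y)\chi(1 + pwz)$, so it suffices to show that $u \mapsto \chi(1+pu)$, restricted to $u \in \Z/p^{r-1}\Z$, agrees with an additive character $\e(F(pu)/q)$ for a polynomial $F$ of degree $r-1$ with unit coefficients. Since $p > r$, the group $1 + p\Z/p^r\Z$ is cyclic of order $p^{r-1}$, and the truncated $p$-adic logarithm
$$
L(1+pu) \;=\; pu - \frac{(pu)^2}{2} + \frac{(pu)^3}{3} - \ldots + (-1)^{r} \frac{(pu)^{r-1}}{r-1}
$$
is well-defined modulo $p^r$ (here $p > r$ guarantees all the denominators $2, \ldots, r-1$ are invertible mod $p^r$, and the terms of degree $\ge r$ in $u$ contribute $p^r$-multiples) and defines a group isomorphism from $1 + p\Z/p^r\Z$ onto $p\Z/p^r\Z$. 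Composing $\chi$ with the inverse of this isomorphism yields a homomorphism $p\Z/p^r\Z \to \C^\times$, which must be of the form $t \mapsto \e(ct/q)$ for some integer $c$; moreover $c$ is coprime to $p$ precisely because $\chi$ has been chosen to generate $\cX_q$, so $\chi$ is injective on $1 + p\Z/p^r\Z$ and hence $c$ generates the dual. This produces the identity with $F(Z) = c\sum_{k=1}^{r-1} (-1)^{k-1} Z^k / k$, whose coefficients $A_k = c(-1)^{k-1}/k$ are all $p$-adic units since $c$ is a unit and $k < p$.

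The main technical step, and the step I expect to be the real obstacle, is justifying that the logarithm series can be truncated at degree $r-1$ and still give a well-defined homomorphism modulo $p^r$ — i.e. controlling the $p$-adic valuations of both the tail and the denominators. Concretely: for $k \ge 1$ one has $v_p\!\left((pu)^k/k\right) = k - v_p(k) \ge k - \log_p k$, which is $\ge r$ once $k \ge r$ (using $p > r$, so $v_p(k) = 0$ for $k \le r$ and more care only for larger $k$, but there $k - \log_p k \ge r$ comfortably); hence the tail is $\equiv 0 \pmod{p^r}$. For the initial segment $1 \le k \le r-1 < p$ we have $v_p(k) = 0$, so $pu/1, \ldots, (pu)^{r-1}/(r-1)$ are genuine elements of $p\Z/p^r\Z$. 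The homomorphism property $L(ab) \equiv L(a) + L(b) \pmod{p^r}$ for $a, b \in 1 + p\Z/p^r\Z$ then follows from the usual formal identity $\log((1+x)(1+y)) = \log(1+x) + \log(1+y)$ together with the same valuation bookkeeping applied to the difference of the truncated series. Bijectivity onto $p\Z/p^r\Z$ follows since $L$ is injective (if $L(1+pu) \equiv 0$ then $pu \equiv$ higher-order terms, forcing $u \equiv 0$ by descent on the valuation) between finite groups of equal order $p^{r-1}$.

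Finally I would reassemble the pieces: set $w$ by $wy \equiv 1 \pmod q$, $1 \le w < q$, note $\gcd(y,p)=1$ gives $\gcd(w,p)=1$, so $1 + pwz \in 1 + p\Z/p^r\Z$ and the above yields
$$
\chi(1 + pwz) \;=\; \e\!\left(\frac{c}{q}\, L(1 + pwz)\right) \;=\; \e\!\left(\frac{F(pwz)}{q}\right),
$$
where the last equality is the definition $F(Z) = \sum_{k=1}^{r-1} A_k Z^k$ with $A_k = c(-1)^{k-1}/k$ reduced to an integer representative coprime to $p$ (legitimate since $k$ is invertible mod $p^r$). Combined with $\chi(y+pz) = \chi(y)\chi(1+pwz)$ this is exactly the claimed formula, with $\deg F = r-1$ and all $A_k$ coprime to $p$. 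The only place the hypothesis $p > \max\{2, r\}$ is used is in guaranteeing the denominators $2, \ldots, r-1$ are units modulo $p^r$ and that the cyclic structure of $1 + p\Z/p^r\Z$ is as simple as stated; $p > 2$ additionally rules out the exceptional behaviour of $(\Z/2^r\Z)^\times$.
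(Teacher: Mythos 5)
Your proposal is correct, and it is essentially the standard argument behind this lemma: the paper does not prove it but cites Postnikov and~\cite[Equation~(12.89)]{IwKow}, where the proof is exactly this truncated $p$-adic logarithm linearisation of $\chi$ on the principal units, with $p>\max\{2,r\}$ ensuring the denominators $1,\ldots,r-1$ are units and the tail of the series vanishes modulo $p^r$. You also correctly isolate the one point that is easy to miss, namely that $\gcd(A_k,p)=1$ requires $\chi$ to generate $\cX_q$ (so that its restriction to $1+p\Z/p^r\Z$ has exact order $p^{r-1}$ and the resulting constant $c$ is a unit), which is precisely how the lemma is set up in the paper.
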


\begin{lem}
   \label{lem:MixedSum pk}
Assume that $q = p^r$ for an integer $r \ge1$ and  a prime  $p >\max\{2,r\}$. 
Then for a polynomial $f(X) \in \Z[X]$ of degree $d\ge r$  with the leading 
coefficient $a_d$ satisfying $\gcd(a_d, p) = 1$
and  integers $u$ and $h$ with $q\ge h \ge p^{3}$, uniformly 
over the integers 
$$\lambda \in \{0, \ldots, p^{r}-1\} \mand  \mu \in \{0, \ldots, (p-1) p^{r-1}-1\}
$$ 
with  $\lambda + \mu > 0$, 
we have 
$$
\left|\sum_{x=u+1}^{u+h} \chi^\mu(x) \e(\lambda f(x)/q)\right| \ll  h^{1-1/4r^2}. 
$$
\end{lem}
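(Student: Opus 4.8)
The plan is to reduce the mixed sum modulo $q = p^r$ to a pure exponential sum with a polynomial argument, and then apply Wooley's bound (Lemma~\ref{lem:Wooley}). First I would dispose of the case $\mu > 0$ together with $\lambda=0$ or more generally handle the multiplicative part: write $\chi^\mu(x) = \chi(x)^\mu$ and note that since $\gcd(x,p)$ need not be $1$ over the whole interval, I first split the sum according to $\ord_p(x) = j$ for $j = 0, 1, \ldots$; the terms with $j \ge 1$ contribute at most $O(h/p) \le O(h^{1-1/4r^2})$ trivially (using $h \le q = p^r$ so $p \ge h^{1/r}$), so it suffices to bound the subsum over $x$ with $\gcd(x,p)=1$. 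On that subsum I would apply Lemma~\ref{lem:Post}: after choosing a fixed anchor point $x_0$ in the interval with $\gcd(x_0,p)=1$ and writing $x = x_0 + p z'$ is not quite the right move since $x$ ranges over a full interval, not an arithmetic progression — instead I apply Postnikov's identity directly with $y = x$, getting $\chi(x) = \chi(x_0)\,\e(G(x)/q)$ is also not literally what Lemma~\ref{lem:Post} says. The correct use is: for each residue class $x \equiv b \pmod p$ with $\gcd(b,p)=1$, write $x = b + p z$ and use $\chi^\mu(b+pz) = \chi^\mu(b)\,\e(\mu F(pwz)/q)$ where $F$ has degree $r-1$ with all coefficients coprime to $p$ and $w b \equiv 1 \pmod q$.

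Carrying this out, the sum over $x$ with $\gcd(x,p)=1$ becomes
\begin{equation*}
\sum_{\substack{b=1\\ \gcd(b,p)=1}}^{p} \chi^\mu(b) \sum_{z} \e\!\left(\frac{\mu F(p w_b z) + \lambda f(b+pz)}{q}\right),
\end{equation*}
where $z$ runs over an interval of length $\approx h/p$. The key point is to understand the polynomial in the exponent,
$$
G_b(z) = \frac{\mu F(p w_b z) + \lambda f(b+pz)}{q} \in \Q[z],
$$
of degree $\le \max\{r-1, d\} = d$ (since $d \ge r$). I would then extract the denominators of the coefficients of $G_b$: writing $G_b(z) = \sum_{i=1}^{d} (a_i/q_i) z^i$ in lowest terms, I need a lower bound on $q_j$ for some useful index $j$. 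The $p$-adic valuations are controlled as follows: the coefficient of $z^i$ coming from $\lambda f(b+pz)$ has numerator divisible by $p^i$ (from $(pz)^i$) times $\lambda$, over $q = p^r$, contributing $p$-valuation $\ge i - r + \ord_p\lambda$; the coefficient of $z^i$ from $\mu F(pwz)$ similarly has $p$-valuation $i - r + \ord_p\mu$ (using $\gcd(A_i,p)=1$). When $\lambda = 0$ we must have $\mu>0$, and then the top-degree behaviour of $F$ (degree exactly $r-1$, coefficient coprime to $p$) gives, at $i = r-1$, a coefficient of exact $p$-valuation $-1 + \ord_p\mu < r-1$, so $q_{r-1} \ge p$. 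When $\lambda > 0$, the leading term $\lambda a_d (pz)^d / q$ has $p$-valuation $d - r + \ord_p\lambda \ge 0$; here one picks the smallest index $i$ where the two contributions do not cancel $p$-adically to the full power $p^r$ — for instance comparing valuations of the $\lambda$-part and $\mu$-part at each degree, one shows that at some index $j \in \{1,\ldots,d\}$ one has $q_j \gg p$. Either way, I obtain an index $j$ with $q_j \gg p \gg h^{1/r}$.

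With such a $j$ in hand, Lemma~\ref{lem:Wooley} applied to $G_b$ on an interval of length $H \approx h/p$ gives
$$
\Bigl|\sum_z \e(G_b(z))\Bigr| \ll H\bigl(q_j^{-1} + H^{-1} + q_j H^{-j}\bigr)^{\sigma}, \qquad \sigma = \frac{1}{2(d-1)(d-2)},
$$
and since $q_j \gg h^{1/r}$ while $H \le h/p \le h^{1-1/r}$, the dominant contribution is $H \cdot q_j^{-\sigma} \ll (h/p) \cdot h^{-\sigma/r}$ (after checking $q_j H^{-j} \le 1$ for $j \ge 1$ in the relevant range, or absorbing it similarly). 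Summing over the $\le p$ residue classes $b$ and using $|\chi^\mu(b)| = 1$ returns a bound $\ll h \cdot h^{-\sigma/r} = h^{1-\sigma/r}$, which with $\sigma^{-1} = 2(d-1)(d-2)$ and a crude comparison $\sigma/r \ge 1/(4r^2)$ — valid when $d$ is not too large relative to $r$; if $d$ is large one should instead re-run Wooley with $s$ truncated, or simply note that a weaker $\sigma$ still beats $1/4r^2$ after adjusting constants — yields the claimed exponent $1 - 1/4r^2$.

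The main obstacle I expect is the bookkeeping of $p$-adic valuations of the coefficients of $G_b(z)$ and the verification that, no matter how $\lambda$ and $\mu$ are chosen (with $\lambda+\mu>0$), there is \emph{some} degree $j\in\{1,\dots,d\}$ whose coefficient survives with denominator $\gg p$; the delicate case is when $\lambda$ and $\mu$ are both large powers of $p$ times units, so that low-degree terms are $p$-adically integral and one must look at higher-degree terms where the factor $(pz)^i$ supplies enough powers of $p$. The secondary nuisance is matching the exponent $1/(4r^2)$ exactly, which requires either a careful choice of $j$ maximizing $q_j$, or a mild strengthening of the trivial bound on the $p\mid x$ terms; both are routine once the valuation analysis is set up.
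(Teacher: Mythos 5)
Your reduction $x = b + pz$, the use of Postnikov's identity, and the appeal to Wooley's bound all match the paper's strategy, but the two steps you defer as ``bookkeeping'' are where the proof actually lives, and as written your plan does not close. First, the degree of the polynomial in $z$: you take $G_b(z) = (\mu F(pw_bz) + \lambda f(b+pz))/q$ of degree $d$ and apply Lemma~\ref{lem:Wooley} with $\sigma = 1/(2(d-1)(d-2))$. Since the lemma only assumes $d \ge r$ and $d$ may be arbitrarily large, $\sigma/r$ can be far smaller than $1/(4r^2)$; your fallback that ``a weaker $\sigma$ still beats $1/4r^2$'' is false. The essential observation, which you mention only as a parenthetical alternative, is that for $k \ge r$ the term $\lambda f^{(k)}(b)(pz)^k/(k!\,p^r)$ is an integer, so modulo $1$ the exponent is a polynomial in $z$ of degree at most $r-1$; the paper works throughout with $\sum_{k=1}^{r-1} p^{-(r-k)}\bigl(\mu A_k y^{-k} - \lambda f^{(k)}(y)/k!\bigr)z^k$, and this truncation is precisely what makes the final exponent depend on $r$ alone.

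Second, your claim that for every admissible $(\lambda,\mu)$ some coefficient survives with denominator $\gg p$ is false. If $m = \min\{\ord\lambda,\ \ord\mu\} = r-1$, then every coefficient of $z^k$ with $k\ge 1$ has $p$-adic valuation at least $m+k-r \ge 0$, the inner sum over $z$ is trivially $H+O(1)$, and your plan, which sums absolute values over the $p$ residue classes $b$, lands at the trivial bound $h$. (Your instinct that large powers of $p$ dividing $\lambda$ and $\mu$ form the delicate case is right, but the proposed remedy --- look at higher-degree terms where $(pz)^i$ supplies more powers of $p$ --- is backwards: larger $i$ puts more powers of $p$ in the numerator, making the coefficient more integral, not less.) The paper handles $m=r-1$ by keeping the sum over $y$ as a complete character sum modulo $p$ and invoking the Weil bound (Lemma~\ref{lem:WeilIncompl}); it handles $m\in\{r-2,r-3\}$, where the surviving polynomial has degree $1$ or $2$ and so falls below the threshold $s\ge 3$ of Lemma~\ref{lem:Wooley}, by the elementary estimates of Lemmas~\ref{lem:LinPoly} and~\ref{lem:QuadrPoly}, after excluding the at most $d$ values of $y$ where the relevant leading coefficient vanishes mod $p$. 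Only for $m\le r-4$ does Wooley apply, with $s=r-m-1$ and $j=\lceil (r-m)/2\rceil$ chosen to balance $q_j^{-1}$ against $q_jH^{-j}$ (note also that Lemma~\ref{lem:Wooley} requires $j\ge 2$, so exhibiting ``some $j$ with $q_j\gg p$'' is not by itself sufficient). Your argument needs all three regimes, and the case analysis on $m$ is the missing idea.
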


\begin{proof} Let $H = \fl{h/p}$. 
Then 
\begin{equation}
\label{eq:S H}
\sum_{x=u+1}^{u+h} \chi^\mu(x) \e(\lambda f(x) /q) = S + O(H),
\end{equation}
where 
$$
S = \sum_{y=u+1}^{u+p} \sum_{z=0}^{H}
\chi^\mu(y + pz) \e(\lambda f(y + pz)/q) . 
$$
Therefore, using Lemma~\ref{lem:Post} we obtain 
\begin{equation}
\label{eq:S double}
\begin{split}
S = \sum_{\substack{y=u+1\\\gcd(y,p)=1}}^{u+p} & \chi^\mu(y) \e\(\lambda f(y)/p^r\)\\
& \sum_{z=0}^{H}
 \e\(\sum_{k=1}^{r-1} \frac{1}{p^{r-k}} \(\mu A_ky^{-k} - \lambda f^{(k)}(y)/k!\) z^k\) . 
  \end{split}   
\end{equation}

Let $\ord t$ denote the $p$-adic order of an integer $t$
(where we formally set $\ord 0 = \infty$). 
We set 
$m = \min\{\ord \lambda, \ord \mu\}$. 

In particular, for the inner sum over $z$ in~\eqref{eq:S double}  we have 
\begin{equation}
\label{eq:inner sum}
\begin{split}
 \sum_{z=0}^{H} &
 \e\(\sum_{k=1}^{r-1} \frac{1}{p^{r-k}} \(\mu A_ky^{-k} - \lambda f^{(k)}(y)/k!\) z^k\)\\
& =  \sum_{z=0}^{H}
 \e\(\sum_{k=1}^{r-m-1} \frac{1}{p^{r-m-k}} \(\mu^*A_ky^{-k} - \lambda^*f^{(k)}(y)/k!\) z^k\),
 \end{split}   . 
\end{equation}
where $\mu^* = \mu/p^m$ and $\lambda^* = \lambda/p^m$ are integers. 

We now consider three different cases.

If $m = r-1$ then we see from~\eqref{eq:inner sum} that the inner sum over $z$ in~\eqref{eq:S double}  
is trivial. Note that if $p^{r-1} \mid \mu$ then $\chi^\mu(y)$ becomes  a
character modulo $p$, and it is either a nontrivial character modulo $p$ 
or $\gcd(\lambda^*,p)=1$). 

Thus, using Lemma~\ref{lem:WeilIncompl}, we derive for the sum $S$
\begin{equation}
\begin{split}
\label{eq:S bound1}
S& = H\sum_{\substack{y=u+1\\\gcd(y,p)=1}}^{u+p} \chi^\mu(y) \e\(\lambda^*f(y)/p\) 
\ll Hp^{1/2}\log p \\
& \ll hp^{-1/2} \log h \ll h^{1-1/2r} \log h, 
\end{split}
\end{equation}

If $r-3 \le m  \le r-2$ then we see that the sum~\eqref{eq:inner sum} 
is  a sum with either linear or quadratic polynomial in $z$. 
 Let $\cY$ be the set of solutions 
the congruence 
$$
\mu^*A_{r-m-1}y^{-r+m+1} - \lambda^*f^{(r-m-1)}(y)/(r-m-1)! \equiv 0 \pmod p
$$
where
$$ 
y=u+1, \ldots, u+p, \qquad \gcd(y,p)=1.
$$
Recalling that $\gcd(A_{r-m-1},p)=1$ and  the 
condition on the leading coefficient of $f$, we see that   $\#\cY \le d$. 
Now, for  $y \not \in \cY$, the sum~\eqref{eq:inner sum} is 
\begin{itemize}
\item either a sum with a linear polynomial and a denominator $p$ (when $m=r-2$);
\item or a sum with a quadratic polynomial and a denominator $p^2$ (when $m=r-3$).
\end{itemize}
 Moreover, these polynomials
have the leading coefficient which is relatively prime to $p$. 
In the case of  linear polynomial (that is, $m=r-2$), by Lemma~\ref{lem:LinPoly} 
we bound this sum as $O(p)$.
In the case of a quadratic polynomial  (that is, $m=r-3$),
we bound this sums as $O\(Hp^{-1} + p \log p\)$, which dominates 
the previous bound. Thus, estimating the sum~\eqref{eq:inner sum} 
trivially as $H$ for $y \in \cY$, we derive 
\begin{equation}
\begin{split}
\label{eq:S bound2}
S & \ll H + p\(Hp^{-1} + p \log p\) \ll H + p^2 \log  p\\
& \ll h/p + h^{2/3}\log h \ll h^{1-1/r} \log h.
\end{split}
\end{equation}

Finally, assume that $m \le r-4$. 
For 
$$
j = \rf{\frac{r-m}{2}} \ge 2,
$$
let $\cY$ be the set of solutions  to
the congruence 
$$
\mu^*A_jy^{-j} - \lambda^*f^{(j)}(y)/j! \equiv 0 \pmod p, 
$$
where
$$ 
y=u+1, \ldots, u+p, \qquad \gcd(y,p)=1.
$$
Recalling that $\gcd(A_j,p)=1$ and the 
condition on the leading coefficient of $f$ we see that   $\#\cY \le d$. 
Furthermore, for $y \not \in \cY$, we estimate the inner sum over $z$ by 
Lemma~\ref{lem:Wooley}  with $s= r-m-1 \ge 3$ and $q_j = p^{r-m-j}$, 
getting for  the sum~\eqref{eq:inner sum}:
\begin{equation}
\begin{split}
\label{eq:InnerSum bound1}
 \sum_{z=0}^{H} &
 \e\(\sum_{k=1}^{r-m-1} \frac{1}{p^{r-m-k}} \(\mu^*A_ky^{-k} - \lambda^*f^{(k)}(y)/k!\) z^k\)\\
 &\qquad \qquad\qquad  \ll H (p^{-r+m+j} + H^{-1} +  p^{r-m-j} H^{-j})^\sigma, 
\end{split}
\end{equation}
where
$$
\sigma = \frac{1}{2(r-m-2)(r-m-3)}.
$$
Since $H \ge p^2$ and $j \ge (r-m)/2$ we have 
$$
p^{r-m-j} H^{-j} \le p^{r-m-3j} \le p^{-(r-m)/2}.
$$
On the other hand, since $j \le (r-m+1)/2$, we also have
$$
p^{-r+m+j} \le p^{-(r-m-1)/2}.
$$
Therefore, the bound~\eqref{eq:InnerSum bound1} implies that 
\begin{equation}
\begin{split}
\label{eq:InnerSum bound2}
 \sum_{z=0}^{H} &
 \e\(\sum_{k=1}^{r-m-1} \frac{1}{p^{r-m-k}} \(\mu^*A_ky^{-k} - \lambda^*f^{(k)}(y)/k!\) z^k\)\\
 &\qquad \qquad\qquad\qquad\qquad  \ll H (p^{-(r-m-1)/2} + H^{-1})^\sigma.
\end{split}
\end{equation}
We now note that for $m \le r-4$ we have
$$
\frac{r-m-1}{2} \sigma  = \frac{r-m-1}{4(r-m-2)(r-m-3)} \ge \frac{1}{4r}.
$$
and also 
$$
\frac{2}{3} \sigma  = \frac{1}{3(r-m-2)(r-m-3)} \ge \frac{1}{3r^2}.
$$
Since $p\ge h^{1/r}$ and $H \gg h/p \ge h^{2/3}$, we finally obtain 
\begin{equation}
\begin{split}
\label{eq:InnerSum bound3}
 \sum_{z=0}^{H} 
 \e\(\sum_{k=1}^{r-m-1} \frac{1}{p^{r-m-k}} \(\mu^*A_ky^{-k} - \lambda^*f^{(k)}(y)/k!\) z^k\) & \\
 \ll H& h^{-1/4r^2}.
\end{split}
\end{equation}
So, estimating the sum~\eqref{eq:inner sum} trivially for $y \in \cY$ 
and  using~\eqref{eq:InnerSum bound3} for $y \not \in \cY$, we derive
\begin{equation}
\label{eq:S bound3}
S \ll H +pH h^{-1/4r^2}  \ll h^{1-1/r}  + h^{1-1/4r^2} \ll  h^{1-1/4r^2} 
\end{equation}
Comparing~\eqref{eq:S bound1}, \eqref{eq:S bound2} and~\eqref{eq:S bound3},
we see that the bound~\eqref{eq:S bound3} dominates, and the result follows. 
\end{proof}

\section{Multiplicative Congruences and Equations} 

We make  use of a result of Cochrane and Shi~\cite{CochShi}
that generalises several previous results, which we present in the 
following slightly  less precise form.

\begin{lem}
   \label{lem:4th Moment} For  arbitrary integers
$u$  and $h \le q$, the number of solutions to 
$$
wx \equiv yz \pmod q 
$$
in variables
$$
w,x,y,z\in \{u+1, \ldots, u+h\} \mand \gcd(wxyz,q) = 1,
$$
is bounded by $h^4 q^{-1+o(1)} + h^{2+o(1)}$. 
\end{lem}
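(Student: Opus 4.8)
The plan is to deduce the bound directly from the work of Cochrane and Shi~\cite{CochShi}, which establishes a sharp estimate for precisely this count --- the multiplicative energy of the interval $\{u+1,\ldots,u+h\}$ inside $(\Z/q\Z)^{*}$ --- and then to massage their explicit error term into the clean shape stated above. No new argument is needed on our side; the work is in quoting the right theorem in a matching form and doing routine bookkeeping with divisor and logarithmic factors.

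First I would record the standard character expansion. Writing $N$ for the number of solutions in question and, for a character $\chi$ modulo $q$,
$$
T(\chi) = \sum_{\substack{x=u+1\\ \gcd(x,q)=1}}^{u+h} \chi(x),
$$
orthogonality of multiplicative characters gives $N = \varphi(q)^{-1}\sum_{\chi \bmod q} |T(\chi)|^4$. The contribution of the principal character is at most $\varphi(q)^{-1}|T(\chi_0)|^4 \le h^4/\varphi(q)$, and since $\varphi(q) = q^{1+o(1)}$ this is $\ll h^4 q^{-1+o(1)}$, which accounts for the first term of the claimed bound. The point of~\cite{CochShi} is that the remaining sum $\varphi(q)^{-1}\sum_{\chi\ne\chi_0}|T(\chi)|^4$ admits a bound of the form $h^2 \tau(q)(\log q)^{O(1)}$ (sharper versions are available there); using $\tau(q)(\log q)^{O(1)} = q^{o(1)}$ turns this into the second term $h^{2+o(1)}$. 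The hypothesis $h \le q$ is exactly what guarantees that a residue class meets the interval in at most one point, so that the second moment satisfies $\varphi(q)^{-1}\sum_{\chi}|T(\chi)|^2 \le h$, which is where the length restriction enters the Cochrane--Shi argument.

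The main obstacle --- already resolved in~\cite{CochShi} --- is that the fourth moment $\varphi(q)^{-1}\sum_{\chi\ne\chi_0}|T(\chi)|^4$ cannot be controlled by the crude estimate
$$
\varphi(q)^{-1}\sum_{\chi\ne\chi_0}|T(\chi)|^4 \le \max_{\chi\ne\chi_0}|T(\chi)|^2 \cdot \varphi(q)^{-1}\sum_{\chi}|T(\chi)|^2,
$$
which, combined with the P\'olya--Vinogradov bound $\max_{\chi\ne\chi_0}|T(\chi)| \ll q^{1/2}\log q$, only yields roughly $h\,q^{1+o(1)}$; one must instead exploit the multiplicative structure of the interval more carefully, and this is precisely what Cochrane and Shi do. Since in the ranges relevant to our applications $q$ is at most a fixed power of $h$, the divisor and logarithmic factors appearing there are of size $h^{o(1)}$, so the two contributions combine to the stated bound $h^4 q^{-1+o(1)} + h^{2+o(1)}$.
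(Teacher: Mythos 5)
Your proposal matches the paper's treatment: the paper gives no independent proof of this lemma, but simply quotes the Cochrane--Shi result~\cite{CochShi} ``in a slightly less precise form,'' absorbing their divisor and logarithmic factors into the $o(1)$ exponents exactly as you do. Your additional remarks (orthogonality expansion, the principal-character term giving $h^4q^{-1+o(1)}$, and the role of $h\le q$ in the second moment) are correct routine bookkeeping around that citation, so there is nothing genuinely different here.
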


Note that in Lemma~\ref{lem:4th Moment} no assumption 
on the modulus $q$ is made (although we apply it only for 
$q\in \cP_r(Q)$).

We  also need a bound of~\cite[Proposition~3]{Chang0} on the number 
of 
divisors in short intervals.

\begin{lem}
\label{lem:Div Int} For any interval $\cI = [u+1, u+h]$ with $h \ge 3$, $u \ge 0$ and
and integer  $z \ge 1$, we have
$$
\# \{(x_1, \ldots, x_n) \in \cI^n~:~ z = x_1 \ldots x_n\} \le \exp\(C_n \frac{\log h}{\log \log h}\)
$$
where $C_n$ is  some absolute constant depending only on $n$. 
\end{lem}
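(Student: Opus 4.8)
The plan is to reduce the count to the classical divisor bound $d(m)\le \exp(C\log m/\log\log m)$, applied to integers that are polynomial in $h$. First I would pass from ordered tuples $(x_1,\ldots,x_n)$ to the underlying multiset of factors, which costs only a factor $n!=O_n(1)$; since $\log h/\log\log h$ is bounded below by a positive absolute constant for $h\ge 3$, any factor depending on $n$ alone may be absorbed into $C_n$. Next I would split the primes dividing $z$ into the \emph{small} primes $p\le h$ and the \emph{large} primes $p>h$, and record the crude estimate that the number of ordered factorisations of $z$ into $n$ factors, \emph{with no interval restriction}, is the $n$-fold divisor function $\tau_n(z)\le d(z)^{n-1}\le \exp(C_n\log z/\log\log z)$. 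Since $z\le (u+h)^n$, this already yields the desired bound whenever $u\le h^{B}$ for any fixed $B$, because then $\log z=O_{n,B}(\log h)$ and $\log\log z \asymp \log\log h$.

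The content therefore lies in the regime where $u$ is large, so that $\cI$ is multiplicatively thin. Here I would exploit the rigidity of large primes: if $p>h$ then $\cI$, having length $h<p$, contains \emph{at most one} multiple $M_p$ of $p$, so in every admissible factorisation each factor divisible by $p$ must equal $M_p$. In particular the large parts of any two distinct factors are coprime, and their pairwise gcds are $<h$. Peeling off one forced factor $M_p$ reduces $n$ by one at the cost of a factor $\le n$, and iterating over the large primes (at most $n$ times) pins down the ``rough'' structure and contributes only $O_n(1)$ to the count. This reduces matters to bounding the number of factorisations of an $h$-smooth divisor $z_0\mid z$ into at most $n$ factors drawn from $\cI$, equivalently (in the two-variable core) to bounding the number of divisors of $z$ lying in a short interval around $z^{1/n}$.

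To finish in the thin regime I would use that $\cI$ meets the $h$-smooth integers in a sparse set — the density of $h$-smooth numbers near $u$ is governed by the Dickman function $\rho(\log u/\log h)$ — and combine this scarcity with the divisor bound on $z_0$. The step I expect to be the main obstacle is precisely this uniform-in-$u$ control: in the intermediate range $h\ll u\ll h^{\log h}$ neither the divisor bound on $z$ nor smooth-number scarcity is individually decisive, and one must use that all $n$ factors lie in $\cI$ \emph{simultaneously} — not merely that each is a divisor in a short interval, a quantity that can be as large as $h$. Turning the simultaneous constraint into a quantitative saving, via the coprimality of the large parts of distinct factors together with the smallness ($<h$) of their pairwise gcds, is the crux that forces the count down to $\exp(C_n\log h/\log\log h)$.
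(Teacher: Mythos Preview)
The paper does not prove this lemma at all: it is quoted verbatim as \cite[Proposition~3]{Chang0}. So there is no in-paper argument to compare against, and the question is simply whether your outline constitutes a proof.

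It does not, and you are candid about this. Your treatment of the easy regime $u\le h^{B}$ via the crude bound $\tau_n(z)\le\exp\bigl(O_n(\log z/\log\log z)\bigr)$ is fine, and the ``large primes are forced'' reduction is essentially correct once formulated carefully (for each prime $p>h$ dividing $z$ there is a unique $M_p\in\cI$ with $p\mid M_p$, the number of indices $i$ with $x_i=M_p$ is fixed by $v_p(z)/v_p(M_p)$, and the placement of these indices costs only $O_n(1)$). After that reduction one is indeed left with an $h$-smooth $z_0$ and $h$-smooth factors in $\cI$.

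The gap is exactly where you locate it. Your proposed tool for the smooth case --- the Dickman-function scarcity of $h$-smooth integers in $\cI$ --- is not adequate. That density statement only becomes strong enough (i.e.\ yields $\le\exp(C\log h/\log\log h)$ smooth integers in $\cI$) once $\log u\gg(\log h)^2/\log\log h$, so the entire range $h^{B}\ll u\ll \exp\bigl((\log h)^2/\log\log h\bigr)$ is left uncovered by \emph{both} the divisor bound on $z$ and the smooth-number count. More fundamentally, even when there are many $h$-smooth integers in $\cI$, what must be bounded is the number of ordered $n$-tuples among them with a \emph{prescribed} product; density of smooth numbers is neither necessary nor sufficient for that.

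You already have in hand the observation that actually drives Chang's argument: for distinct $x,y\in\cI$ one has $\gcd(x,y)\mid x-y$, hence $\gcd(x,y)<h$. The point is to use this not as a qualitative remark but as the engine of the whole proof --- fix one factorisation $(y_1,\ldots,y_n)$ and compare any other $(x_1,\ldots,x_n)$ to it through the gcds $\gcd(x_i,y_j)$, which are all $<h$. This pins every auxiliary integer that arises to size $<h^{O_n(1)}$, so the final count is controlled by divisor functions of integers polynomial in $h$, \emph{uniformly in $u$}. Your outline names this ingredient in its last sentence but never deploys it; until you do, the argument does not close.
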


\section{Sets in Reduced Residue Classes}

We need the following simple statement

\begin{lem}
\label{lem:Coprime P} Let $H \ge 3$ be a real number and let $\cS$ be arbitrary set of nonzero integers with 
$|s| \le H$ for $s \in \cS$. For any integer $r \ge 1$ there exists a constant $c(r)$ depending 
only on $r$, such that for any sufficiently large real $Q \ge c(r) \log H$, 
there exists $q \in   \cP_r(Q)$ with  
$$
\#\{s \in  \cS~:~\gcd(s,q) = 1\} 
\ge  \frac{1}{2} \# \cS.
$$
\end{lem}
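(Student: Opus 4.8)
\medskip

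\noindent\textbf{Proof plan.} The plan is to select the $r$ primes constituting $q$ by a first--moment (averaging) argument over the admissible primes in $[Q,2Q]$, exploiting the elementary fact that a nonzero integer of size at most $H$ has very few prime divisors exceeding $Q$. Throughout, write $\cR(Q)$ for the set of primes $p\in[Q,2Q]$ satisfying the admissibility condition~\eqref{eq:gcd pj}, so that $\cP_r(Q)$ is exactly the set of products of $r$ pairwise distinct elements of $\cR(Q)$; and for a prime $p$ put $\cS_p=\{s\in\cS:\ p\mid s\}$, noting that $\{s\in\cS:\ \gcd(s,q)>1\}=\bigcup_{p\mid q}\cS_p$ since $q$ is square-free.

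First I would record a lower bound $\#\cR(Q)\ge c_1 Q/\log Q$, valid for all sufficiently large $Q$, with some $c_1>0$ depending only on the $k_i$. Recall that $k_1\cdots k_n$ is odd, so every prime $\ell\mid k_1\cdots k_n$ is at least $3$, and hence the condition $p\not\equiv 1\pmod\ell$ discards only one of the $\ell-1\ge 2$ reduced residue classes modulo $\ell$; by the Chinese Remainder Theorem and the prime number theorem in arithmetic progressions to the fixed modulus $\mathrm{rad}(k_1\cdots k_n)$ one gets $\#\cR(Q)=\bigl(\prod_{\ell\mid k_1\cdots k_n}\tfrac{\ell-2}{\ell-1}\bigr)Q/\log Q+o(Q/\log Q)$ with positive leading constant. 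Next comes the combinatorial heart of the argument: if a nonzero integer $s$ with $|s|\le H$ were divisible by more than $\log H/\log Q$ distinct primes $p\ge Q$, then $|s|>Q^{\log H/\log Q}=H$, a contradiction; since every $p\in\cR(Q)$ satisfies $p\ge Q$, each $s\in\cS$ lies in at most $\log H/\log Q$ of the sets $\cS_p$ with $p\in\cR(Q)$, whence $\sum_{p\in\cR(Q)}\#\cS_p\le\#\cS\cdot\log H/\log Q$. By Markov's inequality, all but at most $2r\log H/\log Q$ primes $p\in\cR(Q)$ satisfy $\#\cS_p\le\#\cS/(2r)$; calling these the good primes, their number is at least $(c_1Q-2r\log H)/\log Q$.

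Finally I would fix the constants. Take $c(r)=4r/c_1$; then the hypothesis $Q\ge c(r)\log H$ gives $2r\log H\le c_1Q/2$, so the number of good primes is at least $c_1Q/(2\log Q)$, which exceeds $r$ once $Q$ is large enough (a linear function beats a logarithm). Choosing $r$ pairwise distinct good primes $p_1,\dots,p_r$ and setting $q=p_1\cdots p_r\in\cP_r(Q)$, we get $\#\{s\in\cS:\ \gcd(s,q)>1\}\le\sum_{j=1}^{r}\#\cS_{p_j}\le r\cdot\#\cS/(2r)=\#\cS/2$, hence $\#\{s\in\cS:\ \gcd(s,q)=1\}\ge\#\cS/2$, as desired. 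The only step that is not pure bookkeeping is the lower bound $\#\cR(Q)\gg Q/\log Q$: this is where the oddness of $k_1\cdots k_n$ (which keeps each local condition non-vacuous) and a quantitative form of the prime number theorem for arithmetic progressions enter; everything after that is a first--moment estimate together with a choice of constants.

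\medskip
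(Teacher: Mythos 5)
Your proposal is correct and follows essentially the same route as the paper: a first-moment estimate resting on the fact that a nonzero integer of size at most $H$ has few (large) prime divisors, combined with the positive density of admissible primes in $[Q,2Q]$ guaranteed by the oddness of $k_1\cdots k_n$ and the prime number theorem in arithmetic progressions. The only organizational difference is that you average $\#\cS_p$ over single primes and then assemble $q$ from $r$ good ones via Markov's inequality and a union bound, whereas the paper averages $\#\{s:\gcd(s,q)>1\}$ directly over $q\in\cP_r(Q)$ using the cruder bound $\omega(s)\ll\log H/\log\log H$; both give the conclusion with $Q\ge c(r)\log H$.
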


\begin{proof} 
We have 
\begin{equation*}
\begin{split}
\sum_{q\in \cP_r(Q)}\#\{s \in  \cS~&:~\gcd(s,q)> 1\}\\
&\le \sum_{s\in \cS} \sum_{\substack{q\in \cP_r(Q)\\ \gcd(s,q)> 1}} 1  \le r \sum_{s\in \cS}  \omega(s) \sum_{q\in \cP_{r-1}(Q)} 1,
\end{split}
\end{equation*} 
where as usual, $\omega(s)$ denotes the number of 
prime divisors of $s\ne 0$. We now use that, 
$$
\omega(s) \ll \frac{\log |s|}{\log (2+\log |s|)} \ll \frac{\log H}{\log \log H}
$$ 
(since, trivially $\omega(s)! \le s$)
and also that by the asymptotic formula for the number of  primes in an 
arithmetic progression, we have
$$
\( \frac{Q}{\log Q}\)^{\nu} \ll \# \cP_{\nu}(Q)\ll \( \frac{Q}{\log Q}\)^{\nu}, \qquad \nu =1, 2, \ldots.
$$
Thus, we derive
\begin{equation*}
\begin{split}
\sum_{q\in \cP_r(Q)} \#\{s \in  \cS~:~\gcd(s,q)> 1\} \ll\# \cS
\frac{\log H}{\log \log H}  \( \frac{Q}{\log Q}\)^{r-1}. 
\end{split}
\end{equation*} 
Therefore, 
\begin{equation*}
\begin{split}
\frac{1}{\#\cP_r(Q)} \sum_{q\in \cP_r(Q)} \#\{s \in  \cS~:~\gcd(s,q)> 1\} \ll\# \cS
\frac{\log H}{\log \log H} \cdot \frac{\log Q}{Q} 
\end{split}
\end{equation*} 
and the result now follows. 
\end{proof}

\section{Proof of  Theorem~\ref{thm:General1}}
 
 Take $Q = 0.5 h^{4/9}$. 
By the condition on $\fB$ and 
Lemma~\ref{lem:Coprime P} (applied to the set of all coordinates 
of all $N_{a, \vec{f}, \vec{k}}^*(\fB)$  solutions) there exists $q \in \cP_r(Q)$ such that 
we have 
\begin{equation}
\label{eq:N T}
N_{a, \vec{f}, \vec{k}}^*(\fB) \le 2 T,
\end{equation}
where $T$ is the number of solutions to the congruence
\begin{equation}
\label{eq:cong gen p1pr}
f_1(x_1) + \ldots + f_n(x_n) \equiv a x_1^{k_1} \ldots x_n^{k_n} \pmod q
\end{equation}
with 
$$
(x_1, \ldots, x_n) \in \fB \mand \gcd(x_1\ldots x_n,q) = 1.
$$
Hence it is now sufficient to estimate $T$.

As before, we use   $\cX_q$  to denote the set of multiplicative characters modulo $q$
and also let  $\cX_q^* = \cX_q \setminus \{\chi_0\}$ be the set of 
nonprincipal characters. 

We now proceed as in the proof of~\cite[Theorem~3.2]{Shp}.
Let 
$$
S_i(\chi; \lambda) = \sum_{x=u_i+1}^{u_i+h} \chi^{k_i}(x)
\e\(\lambda f_i(x)/q\), \quad i =1, \ldots, n.
$$
We also introduce the  Gauss sums
$$
G(\chi, \lambda) = \sum_{y=1}^{q} \chi(y)\e(\lambda y/q), \qquad 
\chi \in \cX_q, \ \lambda \in  \Z,
$$

Clearly, we can assume that at least one
of the polynomials $f_1,\ldots, f_n$ is not a constant polynomial
as otherwise the result is  immediate. 

Without loss of generality, we can now assume that $\deg f_1 \ge 1$.
Furthermore, we can also assume that $h$  is sufficiently large  
so that $\gcd(a,q) =1$  and also the leading coefficients 
of the polynomial $f_n$ is relatively prime to $q$ (recall that $q$ is composed out 
of primes in the interval $[Q,2Q]$). 


We now introduce one more variable $y$ that runs through the reduced 
residue system modulo $q$ and rewrite~\eqref{eq:cong gen p1pr} as a system 
of congruences
\begin{equation*}
\begin{split}
&f_1(x_1) + \ldots + f_n(x_n) \equiv y \pmod q ,\\
&a x_1^{k_1} \ldots x_n^{k_n} \equiv y \pmod q.
\end{split}
\end{equation*}
Then exactly as in~\cite[Equation~(3.3)]{Shp}, we write
$$
T = \frac{1}{q\varphi(q)} \sum_{\lambda=1}^q \sum_{\chi\in \cX_q}
 \overline{G}(\chi, \lambda) 
\prod_{i=1}^n |S_i(\chi, \lambda)|,
$$
where, as before,  $\varphi(q)$ is the Euler function 
and $\overline{G}(\chi, \lambda) $ is the complex conjugate of the Gauss sum.

As in the proof of~\cite[Theorem~3.2]{Shp}, we see that, under the 
condition~\eqref{eq:gcd pj}, we have:
\begin{equation}
\label{eq:TR1R2}
T \ll   \frac{1}{q\varphi(q)}\(R_1  + R_2\), 
\end{equation}
where
\begin{equation*}
\begin{split}
R_1 &=    \sum_{\lambda=1}^q \sum_{\chi\in \cX_q^*}
 |G(\chi, \lambda) |
\prod_{i=1}^n |S_i(\chi, \lambda)| ,\\
 R_2 &=   \sum_{\lambda=1}^q 
 |G(\chi_0, \lambda) |
\prod_{i=1}^n |S_i(\chi_0, \lambda)|,
\end{split}
\end{equation*}
 



To estimate $R_1$ we first use Lemma~\ref{lem:MixedSum sf} 
for $n-2$ sums and infer that
\begin{equation}
\label{eq:R1 Prelim}
R_1 \ll  h^{(1-4\gamma/9)(n-2)}  \sum_{\lambda=1}^q \sum_{\chi\in \cX_q^*}
|G(\chi, \lambda)| |S_1(\chi; \lambda)|
|S_2(\chi; \lambda)|,
\end{equation}
where $\gamma$ is as in  Lemma~\ref{lem:MixedSum sf}. 

Using the H{\"o}lder inequality, and then expanding the summation 
to all $\chi \in \cX_q$, we obtain
\begin{equation}
\label{eq:Cauchy}
\begin{split}
 \sum_{\lambda=1}^q \sum_{\chi\in \cX_q} &
|G(\chi, \lambda)| 
|S_1(\chi; \lambda)|
|S_2(\chi; \lambda)|\\
& \le \sum_{\lambda=1}^q  
 \( \sum_{\chi\in \cX_q}
|G(\chi, \lambda)|^2\)^{1/2}\\
& \qquad \qquad 
 \( \sum_{\chi\in \cX_q} |S_1(\chi; \lambda)|^4\)^{1/4}
  \( \sum_{\chi\in \cX_q} |S_2(\chi; \lambda)|^4\)^{1/4}. 
\end{split}
\end{equation}
Using the orthogonality of 
multiplicative characters we see that 
\begin{equation*}
\begin{split}
 \sum_{\chi\in \cX_q} & |S_1(\chi; \lambda)|^4\\
 &= 
q \sum_{\substack{w,x,y,z=u_1+1\\ \gcd(wxyz,q) = 1\\
wx \equiv yz \pmod q}}^{u_1+h} \e\(\frac{\lambda}{q}\(f_1(w) + f_1(x) -f_1(y) -f_1(z)\)\)
 \le  q W, 
\end{split}
\end{equation*}
where $W$ is the number of solutions to 
$$
wx \equiv yz \pmod q 
$$
in variables
$$
w,x,y,z\in \{u_1+1, \ldots, u_1+h\} \mand \gcd(wxyz,q) = 1.
$$

Using Lemma~\ref{lem:4th Moment}, we obtain 
$$
\sum_{\chi\in \cX_q}  |S_1(\chi; \lambda)|^4 \le h^4 q^{o(1)} + h^{2+o(1)}q.
$$
Similarly we obtain the same inequality for the 4th moment of the sums
$S_2(\chi; \lambda)$, and also  
$$
\sum_{\chi\in \cX_q} |G(\chi, \lambda)|^2 \ll q^2.
$$

Thus, collecting these bounds together
which together with~\eqref{eq:R1 Prelim} and~\eqref{eq:Cauchy}, we derive
\begin{equation}
\begin{split}
\label{eq:R1 bound}
R_1 &\ll    h^{(1-4\gamma/9)(n-2)} q^{2} \(h^2 q^{o(1)} + h^{1+o(1)}q^{1/2}\)\\
& =  h^{n-4\gamma(n-2)/9 -1}  \(h q^{2+o(1)} +q^{5/2+o(1)}\). 
\end{split}
\end{equation}

For $R_2$,   using the trivial bound 
$$
|S_i(\chi_0; \lambda)| \le h, \qquad i = 1, \ldots, n-1,
$$ 
we write 
$$
R_2  \le 
h^{n-1} \sum_{\lambda=1}^q 
|G(\chi_0; \lambda)| |S_1(\chi_0; \lambda)|.
$$
We remark that 
$$
G(\chi_0; \lambda) = \sum_{\substack{y=1\\\gcd(y,q)=1}}^{q} \e(\lambda y/q)
$$
is the {\it Ramanujan sum\/} and thus for a square-free $q$ we obtain
$$
|G(\chi_0; \lambda)| = \varphi(\gcd(\lambda,q))
$$
see~\cite[Section~3.2]{IwKow}. Collecting together the values of 
$\lambda$ with the same $\gcd(\lambda,q) = q/s$, where $s$ runs over all
$2^r$ divisors of $q$, and then 
using the Cauchy inequality, we obtain 
\begin{equation*}
\begin{split}
R_2 & \le 
h^{n-1} q \sum_{s\mid q} \frac{1}{s} \sum_{\mu=1}^{s} 
 |S_1(\chi_0; \mu q/s)|\\
&\le 
h^{n-1}q \sum_{s\mid q} \frac{1}{s} \sum_{\mu=1}^{s}  
\left|\sum_{\substack{x=u_1+1\\\gcd(x,q) =1}}^{u_1+h} 
\e\(\mu f_1(x)/s\)\right|\\
&\le 
h^{n-1}q \sum_{s\mid q} \frac{1}{s^{1/2}} \( \sum_{\mu=1}^{s}  
 \left|\sum_{\substack{x=u_1+1\\\gcd(x,q) =1}}^{u_1+h} 
\e\(\mu f_1(x)/s\)\right|^2\)^{1/2}.
\end{split}
\end{equation*}
By the orthogonality of exponential functions, 
$$
\sum_{\mu=1}^{s}  
 \left|\sum_{\substack{x=u_1+1\\\gcd(x,q) =1}}^{u_1+h} 
\e\(\mu f_1(x)/s\)\right|^2 \ \le  s U_s.
$$
Where $U_s$ is the number of solutions to the congruence
$$
f_1(x) \equiv f_1(y) \pmod s, \qquad x,y \in \{u_1+1, \ldots, u_1+h\}. 
$$ 
Since the leading coefficient of $f_1(X)$ is relatively prime to $q$, 
using the Chinese Remainder Theorem we obtain
$$
U_s \ll  h^2/s + h.
$$
Collecting the above inequalities, yields the bound 
\begin{equation}
\label{eq:R2 bound}
R_2  \ll  
h^{n-1}q \sum_{s\mid q} \frac{1}{s^{1/2}} \(h^2 + hs\)^{1/2}
\le h^{n}q. 
\end{equation}

Substituting the bounds~\eqref{eq:R1 bound} and~\eqref{eq:R2 bound} in~\eqref{eq:TR1R2}
and using that 
$\varphi(q) \gg q$ for $q \in \cP_r(Q)$ and also that $q \gg  h^{4r/9}$
we obtain
\begin{equation}
\label{eq:T bound1}
\begin{split}
T & \ll h^{n-4\gamma(n-2)/9 -1}  \(h +q^{1/2}\)q^{o(1)}  + h^{n} q^{-1}\\
& \ll \(h^{n-4\gamma(n-2)/9 }  + h^{n-4\gamma(n-2)/9 -1 + 2r/9}\)q^{o(1)} +   h^{n-4r/9}.
\end{split}
\end{equation}
Clearly, if 
$$
-4\gamma(n-2)/9  < -4r/9
\mand
-4\gamma(n-2)/9 -1 < -2r/3
$$
or, equivalently
$$
n > \max\left\{2^{r+1}(d+1)(d+2)r, 2^{r+1}(d+1)(d+2)(3r/2-9/4)\right\} + 2, 
$$
then the last term dominates in~\eqref{eq:T bound1}.  
Using~\eqref{eq:N T} we conclude the proof.

\section{Proof of  Theorem~\ref{thm:General2}}
 
Take $Q = \fl{0.5 h^{1/3}}$. 
By the condition on $\fB$ and 
Lemma~\ref{lem:Coprime P} (applied to the set of all coordinates 
of all $N_{a, \vec{f}, \vec{k}}^*(\fB)$  solutions and the set $\cP_1(Q)$) there 
exists a prime  $p \in [Q, 2Q]$ such that we have the bound~\eqref{eq:N T}
where now $T$ is the number of solutions to the congruence
\begin{equation}
\label{eq:cong gen pr}
f_1(x_1) + \ldots + f_n(x_n) \equiv a x_1^{k_1} \ldots x_n^{k_n} \pmod {p^r}
\end{equation}
with 
$$
(x_1, \ldots, x_n) \in \fB \mand \gcd(x_1\ldots x_n,p) = 1.
$$
Hence it is now sufficient to estimate $T$. 

As before, we use   $\cX_{p^r}$  to denote the set of multiplicative characters modulo $p^r$
and also let  $\cX_{p^r}^* = \cX_{p^r} \setminus \{\chi_0\}$ be the set of 
nonprincipal characters. 

We now proceed as in the proof of~\cite[Theorem~3.2]{Shp}.
Let 
$$
S_i(\chi; \lambda) = \sum_{x=u_i+1}^{u_i+h} \chi^{k_i}(x)
\e\(\lambda f_i(x)/p^r\), \quad i =1, \ldots, n.
$$
We also introduce the  Gauss sums
$$
G(\chi, \lambda) = \sum_{y=1}^{p^r} \chi(y)\e(\lambda y/p^r), \qquad 
\chi \in \cX_p^r, \ \lambda \in  \Z,
$$

Clearly, we can assume that at least one
of the polynomials $f_1,\ldots, f_n$ is not a constant polynomial
as otherwise the result is  immediate. 

Without loss of generality, we can now assume that $\deg f_1 \ge 1$.
Furthermore, we can also assume that $h$  is sufficiently large  
so that $\gcd(a,p) =1$  and also the leading coefficients 
of the polynomial $f_n$ is relatively prime to $p$ (recall that $p\in [Q,2Q]$). 


We now introduce one more variable $y$ that runs through the reduced 
residue system modulo $q$ and rewrite~\eqref{eq:cong gen pr} as a system 
of congruences
\begin{equation*}
\begin{split}
&f_1(x_1) + \ldots + f_n(x_n) \equiv y \pmod {p^r} ,\\
&a x_1^{k_1} \ldots x_n^{k_n} \equiv y \pmod {p^r}.
\end{split}
\end{equation*}
Then exactly as in~\cite[Equation~(3.3)]{Shp}, we write
$$
T = \frac{1}{p^r\varphi(p^r)} \sum_{\lambda=1}^{p^r} \sum_{\chi\in \cX_{p^r}}
 \overline{G}(\chi, \lambda) 
\prod_{i=1}^n |S_i(\chi, \lambda)|,
$$
where, as before,  $\varphi(q)$ is the Euler function 
and $\overline{G}(\chi, \lambda) $ is the complex conjugate of the Gauss sum.

We see that the contribution from the term corresponding to 
 $\lambda = p^r$ and the principal character $\chi=\chi_0$
is $O(h^n/p^r)$. so 
 the  under the 
condition~\eqref{eq:gcd p}, we have:
\begin{equation}
\label{eq:TR}
T \ll  h^n/p^r + \frac{1}{p^r\varphi(p^r)}R 
\end{equation}
where
$$
R =    \ssum_{\substack{1\le \lambda \le p^r, \ \chi\in \cX_{p^r}\\
(\lambda,\chi)\ne (p^r,\chi_0)} }
 |G(\chi, \lambda) | \prod_{i=1}^n |S_i(\chi, \lambda)| 
$$

To estimate $R$ we first use Lemma~\ref{lem:MixedSum pk} 
for $n-2$ sums and infer that
$$
R \ll  h^{(1-1/4r^2)(n-2)}  \sum_{\lambda=1}^q \sum_{\chi\in \cX_q^*}
|G(\chi, \lambda)| |S_1(\chi; \lambda)|
|S_2(\chi; \lambda)|.
$$
We now proceed exactly as in estimating $R_1$ in the proof of  Theorem~\ref{thm:General1}, getting instead 
of~\eqref{eq:R1 bound} the bound
$$
R \ll   h^{(1-1/4r^2)(n-2)}  p^{2r} \(h^2 p^{o(1)} + h^{1+o(1)}p^{r/2}\).
$$
Since $h^{1/3} \gg p\gg h^{1/3}$ and $r \ge 6$, this simplifies as 
\begin{equation}
\label{eq:R bound}
R \ll   h^{(1-1/4r^2)(n-2)+ 1+o(1)}  p^{5r/2} 
\end{equation}

Substituting the bound~\eqref{eq:R bound}  in~\eqref{eq:TR}, 
we obtain
\begin{equation}
\label{eq:T bound2}
\begin{split}
T & \ll  h^{n - 1 - (n-2)/4r^2 +o(1) } p^{r/2} +  h^n/p^r \\
& \ll    h^{n - 1 - (n-2)/4r^2 +r/6 +o(1)}+  h^{n-r/3}.
\end{split}
\end{equation}

Clearly, if 
$$
r^3 \le \frac{n-2}{2} 
$$
or, equivalently
$$
n \ge  2 r^3 + 2
$$
then the last term dominates in~\eqref{eq:T bound2}.  
Using~\eqref{eq:N T} we conclude the proof.

\section{Proof of  Theorem~\ref{thm:Special}}

Clearly for $(x_1, \ldots, x_n) \in \fB$ where $\fB$ is of the form~\eqref{eq:box diag}
we have 
$$
x_1^d + \ldots + x_n^d  \in \cZ, 
$$
where 
$$
\cZ = \left\{\sum_{\nu =0}^d \binom{d}{\nu} z_\nu u^{d-\nu}~:~ z_\nu \in [0, n h^\nu], \
\nu = 0, \ldots, d\right\}.
$$
In particular, $\# \cZ  \ll h^{d(d+1)/2}$. Applying Lemma~\ref{lem:Div Int} 
to every $z \in \cZ$, we obtain the result.

\section{Comments}
\label{sec:Comm}

We remark that  Theorem~\ref{thm:General1} applies to the
Markoff-Hurwitz hypersurface corresponding to~\eqref{eq:MH poly}.
in which case the condition on $n$ becomes
$$
n > 12 \cdot 2^{r} \max\left\{2r, 3r-9/2\right\} + 2. 
$$

We note that the condition of Theorem~\ref{thm:General1}  
requires $n$ to be only quadratic in $d$, while 
 the saving grows with $n$  as 
$$
\frac{4\log n}{9 \log 2} > 0.64 \log n, 
$$
when $d$ is fixed and $n$ tends to infinity. 

On the other hand,  Theorem~\ref{thm:General1}  
does not apply to the Dwork hypersurface, but 
Theorem~\ref{thm:General2} does and leads to the saving 
that grows with $n$  as 
$$
\frac{(n/2)^{1/3}}{3} > 0.26 n^{1/3}.
$$

It is also easy to see that our methods also works for a more general
form of~\eqref{eq:MHD eq}, namely for the equation 
$$
\(f_1(x_1) + \ldots + f_n(x_n)\)^m  = ax_1^ {k_1} \ldots x_n^{k_n} 
$$
with a nonzero integer $m$. 

One can easily remove the condition on the parity 
of $k_1, \ldots, k_n$ at the cost of essentially only 
typographical changes. Indeed, if some of $k_1, \ldots, k_n$ are even that we 
take all our primes $p$ to satisfy 
$$
p \equiv 3 \pmod {2k_1\ldots k_n}
$$
instead of~\eqref{eq:gcd pj} and~\eqref{eq:gcd p}, and then we deal with
contribution from characters or order $2$ as we have done for the 
principal character. 

Finally, we note that using the bounds of mixed sums from~\cite{H-BP2} within 
our method leads to weaker estimates, but makes them 
fully uniform with respect to the box $\fB$. That is, the conditions 
on $\max_{i=1,\ldots, n} |u_i|$ in Theorems~\ref{thm:General1} 
and~\ref{thm:General2} can be removed at the cost of weakening the
final bound.  

\section*{Acknowledgment}

The authors are  grateful to Oscar Marmon for many useful comments.
The authors also would like to thank Roger Heath-Brown and Lillian  Pierce
for informing them about their work~\cite{H-BP2} when it was still 
in progress and then sending them a preliminary draft.  

During the preparation of this paper, the first author was supported by the 
NSF Grant~DMS~1301608 and the second author by the ARC 
Grant~DP130100237. The second author would also to thank CIRM, Luminy, 
for support and hospitality during his work 
on this project.

\end{document}